\newcommand{\mff}{{\mathbb{F}}}
\newcommand{\mpp}{{\mathbb{P}}}
\newcommand{\mqq}{{\mathbb{Q}}}
\newcommand{\mzz}{{\mathbb{Z}}}
\newcommand{\mseta}[1]{{\left\langle{#1}\right\rangle}}		% <#1>
\newcommand{\msetb}[1]{{\left\{{#1}\right\}}}				% {#1}
\newcommand{\msetp}[1]{{\left({#1}\right)}}					% (#1)
\newcommand{\msets}[1]{{\left[{#1}\right]}}					% [#1]
\newcommand{\msetss}[1]{{\left[\left[{#1}\right]\right]}}	% [[#1]]
\newcommand{\msetv}[1]{{\left|{#1}\right|}}					% |#1|
\newcommand{\maut}[1]{\operatorname{Aut}\msetp{#1}}			% Aut(#1)
\newcommand{\mac}[1]{{\overline{#1}}}
\newcommand{\mchar}[1]{\operatorname{char}\msetp{#1}}			% char(k)
\newcommand{\mdiv}[1]{\operatorname{div}\msetp{#1}}			% div(#1)
\newcommand{\mhess}[1]{\operatorname{Hess}\msetp{#1}}			% Hess(#1)
\newcommand{\mgl}[2]{{\operatorname{GL}_{#1}}\msetp{#2}}		% GL_#1(#2)
\newcommand{\miso}[1]{\operatorname{Iso}\msetp{#1}}			% Iso(#1)
\newcommand{\mpgl}[2]{{\operatorname{PGL}_{#1}}\msetp{#2}}	% PGL_#1(#2)
\newcommand{\mps}[2]{{\mpp^{#1}\msetp{#2}}}						% P^#1(#2)
\newcommand{\msl}[2]{{\operatorname{SL}_{#1}}\msetp{#2}}		% SL_#1(#2)
\newcommand{\mres}[2]{{\left.{#1}\right|_{#2}}}					% #1|_#2
\numberwithin{equation}{section}
\newtheorem{Theorem}{Theorem}[section]
\newtheorem{Corollary}[Theorem]{Corollary}
\newtheorem{Lemma}[Theorem]{Lemma}
\newtheorem{Proposition}[Theorem]{Proposition}
\begin{document}

\newcommand{\arXivNumber}{1708.09745}

\renewcommand{\thefootnote}{}

\renewcommand{\PaperNumber}{102}

\FirstPageHeading

\ShortArticleName{Hesse Pencils and 3-Torsion Structures}

\ArticleName{Hesse Pencils and 3-Torsion Structures\footnote{This paper is a~contribution to the Special Issue on Modular Forms and String Theory in honor of Noriko Yui. The full collection is available at \href{http://www.emis.de/journals/SIGMA/modular-forms.html}{http://www.emis.de/journals/SIGMA/modular-forms.html}}}

\Author{Ane S.I.~ANEMA, Jaap TOP and Anne TUIJP}

\AuthorNameForHeading{A.S.I.~Anema, J.~Top and A.~Tuijp}

\Address{Bernoulli Institute for Mathematics, Computer Science and Artificial Intelligence,\\
University of Groningen, P.O.~Box~407, 9700 AK Groningen, The Netherlands}
\Email{\href{mailto:a.s.i.anema@22gd7.nl}{a.s.i.anema@22gd7.nl}, \href{mailto:j.top@rug.nl}{j.top@rug.nl}, \href{mailto:annetuijp@gmail.com}{annetuijp@gmail.com}}

\ArticleDates{Received May 08, 2018, in final form September 18, 2018; Published online September 21, 2018}

\Abstract{This paper intends to focus on the universal property of this Hesse pencil and of its twists. The main goal is to do this as explicit and elementary as possible, and moreover to do it in such a way that it works in every characteristic different from three.}

\Keywords{Hesse pencil; Galois representation; torsion points; elliptic curves}

\Classification{14D10; 14G99}

\renewcommand{\thefootnote}{\arabic{footnote}}
\setcounter{footnote}{0}

\section{Introduction}

In a paper with Noriko Yui \cite{TY2007}, explicit equations for all elliptic modular surfaces corresponding to genus zero torsion-free subgroups of $\mbox{PSL}_2({\mathbb Z})$ were presented. Arguably the most famous and classical one of these surfaces is the Hesse pencil, usually described as the family of plane cubics
\begin{gather*}
x^3+y^3+z^3+6txyz=0
\end{gather*}
(see, e.g., \cite[Table~2]{TY2007} and references given there). The current paper intends to focus on the universal property of this Hesse pencil and of its twists (see Theorem~\ref{hpga:thm:mt}). The main goal is to do this as explicit and elementary as possible, and moreover to do it in such a way that it works in every characteristic different from three. We are not aware of any earlier publication of these results in the special case of characteristic two, so although admittedly not very difficult, those appear to be new. The first author of this paper worked on the present results as a (small) part of his Ph.D.~Thesis \cite{ane} and the third author did the same as part of her Bachelor's Thesis~\cite{tuijp2015}. Both were supervised by the second author.

Let $k$ be a perfect field of characteristic different from three. Denote the absolute Galois group of $k$ by $G_k$. Given an elliptic curve~$E$ defined over $k$, one obtains a Galois representation on the 3-torsion group $E \msets{3}$ of $E$. This paper describes the family of all elliptic curves that have equivalent Galois representations on $E\msets{3}$. Recall that elliptic curves $E$ and $E^\prime$ over $k$ yield equivalent Galois representations on their 3-torsion if and only if $E\msets{3}$ and $E^\prime\msets{3}$ are isomorphic as $G_k$-modules. To be more specific, we demand that the equivalence is symplectic: a symplectic homomorphism $\phi\colon E[3]\to E'[3]$ is defined as in~\cite{kuwata2012}, as follows. If
\begin{gather*}
e_3\msetp{S,T}=e^\prime_3\msetp{\phi\msetp{S},\phi\msetp{T}}
\end{gather*}
for all $S,T\in E\msets{3}$ where $e_3$ and $e^\prime_3$ are the Weil-pairings on the 3-torsion of $E$ and $E^\prime$ respectively, then $\phi$ is called a~\emph{symplectic} homomorphism, otherwise~$\phi$ is called an \emph{anti-symplectic} homomorphism.

Next, recall the definition of the Hessian of a polynomial. Let $F\in k\msets {X,Y,Z}$ be a homogeneous polynomial of degree $n$. The \emph{Hessian} $\mhess{F}$ of $F$ is the determinant of the Hessian matrix of $F$, that is
\begin{gather*}
\mhess{F}=\det
\begin{pmatrix}
\dfrac{\partial^2 F}{\partial X^2} & \dfrac{\partial^2 F}{\partial X \partial Y} & \dfrac{\partial^2 F}{\partial X \partial Z} \vspace{1mm}\\
\dfrac{\partial^2 F}{\partial X \partial Y} & \dfrac{\partial^2 F}{\partial Y^2} & \dfrac{\partial^2 F}{\partial Y \partial Z} \vspace{1mm}\\
\dfrac{\partial^2 F}{\partial X \partial Z} & \dfrac{\partial^2 F}{\partial Y \partial Z} & \dfrac{\partial^2 F}{\partial Z^2}
\end{pmatrix},
\end{gather*}
which is either a homogeneous polynomial of degree $3n-6$ or zero.

Given a curve $C=Z\msetp{F}$ with $F\in k\msets{X,Y,Z}$ homogeneous of degree three, the \emph{Hesse pencil} of $C$ is defined as
\begin{gather*}
\mathcal{C}=Z\msetp{tF+\mhess{F}}
\end{gather*}
over $k\msetp{t}$. Recall that the discrete valuations on $k\msetp{t}$ correspond to the points in $\mps{1}{k}$, where we usually write $\msetp{t_0:1}$ as $t_0$ and $\msetp{1:0}$ as $\infty$. We denote the reduced curve of $\mathcal {C}$ at $t_0\in\mps{1}{k}$ by $C_{t_0}$. Notice that $C_\infty=C$ and for $t_0 \neq\infty$
\begin{gather*}
C_{t_0}=Z\msetp{t_0F+\mhess{F}}.
\end{gather*}

In the special case that $C=E$ is an elliptic curve given by a~Weierstrass equation, we have (see Section~\ref{hpga:sec:fp}) that the point $O$ at infinity is a~point on $E_{t_0}$ for every $t_0\in\mps{1}{k}$. If $E_{t_0}$ is a~smooth curve, then this makes it an elliptic curve with unit element $O$.

In the case of characteristic two, the standard definition of the Hessian does not lead to a~satisfactory theory. In Sections~\ref{8.2} and~\ref{8.3} a modified Hessian is introduced for this case; in fact this modification was already used by Dickson~\cite{Dickson} in 1915. The goal of this paper is to provide an elementary proof of the following theorem:

\begin{Theorem}\label{hpga:thm:mt} If $E$ and $E^\prime$ are elliptic curves over $k$, with $E$ given by some Weierstrass equation over $k$, then there exists a symplectic isomorphism $E\msets{3}\rightarrow E^\prime\msets{3}$ if and only if $E'$ appears in the Hesse pencil of $E$, i.e., $E_{t_0}\cong_kE^\prime$ for some $t_0\in\mps{1}{k}$.
\end{Theorem}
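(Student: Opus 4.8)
The plan is to prove the two implications separately, drawing on three facts that are either classical or established in the preceding sections. First, the base locus of the Hesse pencil $\mathcal{C}=Z\msetp{tF+\mhess{F}}$ of $E$ is the set $\mathcal{F}$ of the nine flexes of the Weierstrass model of $E$, and $\mathcal{F}$ together with the Weierstrass origin $O$ equals the subgroup $E\msets{3}\subset\mpp^2$; moreover $\mathcal{F}=Z\msetp{F}\cap Z\msetp{\mhess{F}}$ is a set of nine distinct points, so by Cayley--Bacharach the plane cubics through $\mathcal{F}$ are precisely those in the pencil $\mseta{F,\mhess{F}}$, i.e.\ the members of $\mathcal{C}$. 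Second, the stabiliser of $\mathcal{F}$ in $\mpgl{3}{\mac{k}}$ acts faithfully on $\mathcal{F}$ (four of the nine points are in general position) with image the affine symplectic group $\msetp{\mzz/3}^2\rtimes\operatorname{Sp}_2\msetp{\mff_3}$; equivalently, any bijection between the flex sets of two plane cubics that is linear and symplectic with respect to chosen flex-origins is induced by a unique element of $\mpgl{3}{\mac{k}}$. Third, the $3$-torsion Weil pairing is constant in the pencil: letting $U\subset\mps{1}{\mac{k}}$ be the complement of the four points below the singular fibres, the nine base points give disjoint sections over $U$ that trivialise the relative $3$-torsion as the constant group scheme with fibre $E\msets{3}$, so the Weil pairing is a morphism from the connected $U$ to the finite set $\mu_3$, hence constant and equal to the Weil pairing $e_3$ of $E$ (realised on the fibre $E$ itself).

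For the implication ``$E'$ in the Hesse pencil of $E$ $\Rightarrow$ symplectic isomorphism'', suppose $E_{t_0}\cong_k E'$; then $E_{t_0}$ is smooth, being isomorphic to an elliptic curve. Since $\mhess{F}$ (or, in characteristic two, its modification from Sections~\ref{8.2}--\ref{8.3}) vanishes along $\mathcal{F}$, the curve $E_{t_0}=Z\msetp{t_0F+\mhess{F}}$ passes through the nine points of $\mathcal{F}$, and as a smooth member of the pencil it has $\mathcal{F}$ as its flex set, with $O$ a flex and the group law given by collinearity in $\mpp^2$ exactly as on $E$. Hence the identity on $\mathcal{F}$ is a $G_k$-equivariant group isomorphism $E\msets{3}\to E_{t_0}\msets{3}$, and it is symplectic by the constancy of the Weil pairing in the pencil. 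Composing with the isomorphism $E_{t_0}\msets{3}\to E'\msets{3}$ induced by $E_{t_0}\cong_k E'$ (isomorphisms of elliptic curves respect the Weil pairing) gives the required symplectic isomorphism $E\msets{3}\to E'\msets{3}$.

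For the converse, let $\phi\colon E\msets{3}\to E'\msets{3}$ be a symplectic isomorphism of $G_k$-modules; as a group homomorphism $\phi$ sends the origin $O$ of $E$ to the origin $O'$ of $E'$, which is $k$-rational, so $3\msetp{O'}$ is a $k$-rational divisor class. The complete linear system $\msetv{3\msetp{O'}}$ embeds $E'$ over $k$ as a plane cubic $C'\subset\mpp^2_k$ with $O'$ a flex and flex set $C'\msets{3}$. Under the identification $C'\msets{3}=E'\msets{3}$ furnished by this model, $\beta:=\phi^{-1}\colon C'\msets{3}\to E\msets{3}=\mathcal{F}$ is a bijection between two nine-point subsets of $\mpp^2$ that is $G_k$-equivariant and, since $\phi$ is a symplectic group isomorphism, linear and symplectic with respect to the flex-origins $O'$ and $O$. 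By the second fact above, $\beta=\mres{g}{C'\msets{3}}$ for a unique $g\in\mpgl{3}{\mac{k}}$. For $\sigma\in G_k$ the conjugate projective transformation $\sigma(g)$ also induces $\beta$ on $C'\msets{3}$ --- here one uses the $G_k$-stability of $C'\msets{3}$ and $\mathcal{F}$ together with the $G_k$-equivariance of $\beta$ --- so uniqueness forces $\sigma(g)=g$, whence $g\in\mpgl{3}{k}$. Now $g\msetp{C'}$ is a plane cubic over $k$ through $\mathcal{F}$, hence a member $E_{t_0}$ of the Hesse pencil of $E$ with $t_0\in\mps{1}{k}$, and $g$ restricts to a $k$-isomorphism $C'\to E_{t_0}$ carrying $O'$ to $O$; combined with $E'\cong_k C'$ this yields $E'\cong_k E_{t_0}$.

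The step I expect to be the main obstacle is the descent just used: that the projective transformation $g$ matching the two flex configurations is defined over $k$. It goes through only because $g$ is \emph{unique}, and uniqueness in that precise form is exactly where the hypothesis that $\phi$ be symplectic --- rather than merely an isomorphism of $G_k$-modules --- enters: only linear-symplectic bijections between Hesse flex configurations extend to projective transformations, so for an anti-symplectic $\phi$ no such $g$ exists and $E'$ does not appear in the pencil of $E$. The argument therefore genuinely rests on the precise determination of the stabiliser of $\mathcal{F}$ in $\mpgl{3}{\mac{k}}$ and of its action on $\mathcal{F}$. Two further points require attention: carrying out the statements about flexes, the Hessian and Cayley--Bacharach uniformly in characteristic two via Dickson's modified Hessian, and, in the converse, checking that the model $C'$ and the ensuing isomorphisms are honestly defined over $k$ in the cases where $E'$ has extra automorphisms.
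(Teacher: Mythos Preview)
Your proof is correct and shares the paper's overall architecture (flex points equal $E\msets{3}$, Weil pairing constant in the pencil, uniqueness from four points in general position, Hilbert~90 for descent), but the crucial existence step is handled differently. The paper shows that every symplectic $\phi$ arises as the restriction of some $\Phi\colon E_{t_0}\to E'$ (Proposition~\ref{hpga:prop:wprigepgl}) by an explicit \emph{counting} argument: bring the pencil into Weierstrass form, solve $j(E_t)=j(E')$, verify via a discriminant computation that the solutions are distinct, multiply by $\msetv{\maut{E'}}$, and match the resulting $24$ maps against $\msetv{\msl{2}{\mff_3}}=24$; this requires separate treatment of the cases $j(E')\in\msetb{0,1728,j(E)}$. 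You instead invoke the classical determination of the stabiliser of a Hesse configuration as the Hessian group acting as $(\mzz/3)^2\rtimes\msl{2}{\mff_3}$, together with Cayley--Bacharach to place $g(C')$ in the pencil. The content is equivalent---the surjection of that stabiliser onto $\msl{2}{\mff_3}$ is exactly what the paper's count establishes---but the packaging differs: the paper's route is deliberately computational and elementary in keeping with its stated aim, whereas yours is uniform and computation-free once the Hessian-group structure is taken as input (classical in characteristic $\neq 2,3$; in characteristic two it needs its own verification, as you rightly flag). Your worry about extra automorphisms of $E'$ is unnecessary: the embedding via $\msetv{3(O')}$ and the uniqueness-based descent of $g$ go through regardless of $\maut{E'}$.
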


We note that both Fisher's paper \cite{fisher2012} and Kuwata's paper \cite{kuwata2012} discuss, apart from the result above (although not in characteristic two) also the case of anti-symplectic isomorphisms between the $3$-torsion groups of elliptic curves.

In Sections~\ref{hpga:sec:fp}, \ref{hpga:sec:3tgrp} and~\ref{hpga:sec:wp} we show that the 3-torsion groups of an elliptic curve in Weierstrass form and its Hesse pencil are identical not only as sets, but also have the same group structure and Weil-pairings. Using the Weierstrass form of the Hesse pencil computed in Section~\ref{hpga:sec:wf} and the relation between a linear change of coordinates and its restriction to the 3-torsion group described in Section~\ref{hpga:sec:3tlcc}, we prove in Section~\ref{hpga:sec:wp3tiie} essentially by a counting argument that an
isomorphism of the 3-torsion groups respecting the Weil-pairings is the restriction of a linear change of coordinates. The proof of the theorem is completed in Section~\ref{hpga:sec:pot}. After this, we adapt the argument in order to conclude the same result in characteristic $2$ (where a slightly adapted notion of Hesse pencil is required). We compare our results with existing literature in Section~\ref{hpga:sec:cwer}.

\section{The flex points}\label{hpga:sec:fp}

Let $C=Z\msetp{F}$ be a plane curve with $F\in k\msets{X,Y,Z}$ homogeneous of degree $n$ and irreducible. A point~$P$ on~$C$ is called a \emph{flex point} if there exists a line~$L$ such that the intersection number of~$C$ and $L$ at $P$ is at least three. Notice that in our definition $P$ is allowed to be a singular point on $C$.

The \emph{Hessian curve} of $C$ is defined as $\mhess{C}=Z\msetp{\mhess{F}}$.

\begin{Proposition}\label{two} If $P$ is a point on $C$ and $\mchar{k}\nmid (n-1)$, then $P$ is a flex point if
and only if $P\in C\cap\mhess{C}$.
\end{Proposition}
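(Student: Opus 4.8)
The plan is to separate the two cases $P$ singular and $P$ smooth, using Euler's identity as the main tool. For a form $F$ of degree $n$ one has $nF=XF_X+YF_Y+ZF_Z$, and differentiating once more gives $(n-1)F_X=XF_{XX}+YF_{XY}+ZF_{XZ}$ together with the two analogous identities; writing $M$ for the Hessian matrix of $F$ and $v=(X,Y,Z)^{\mathrm T}$, these say $Mv=(n-1)(F_X,F_Y,F_Z)^{\mathrm T}$, an identity of vectors with polynomial entries.

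If $P$ is a singular point of $C$, then both sides of the equivalence hold. On the one hand $F_X(P)=F_Y(P)=F_Z(P)=0$, so the identity above shows that the nonzero coordinate vector of $P$ lies in the kernel of $M(P)$; hence $\mhess{F}(P)=\det M(P)=0$, i.e., $P\in C\cap\mhess{C}$. On the other hand $P$ is automatically a flex point: the tangent cone of $C$ at $P$ is a nonzero form of degree $m_P(C)\ge 2$, and taking for $L$ (over $\mac k$) a line occurring in this tangent cone gives $I_P\msetp{C,L}\ge m_P(C)+1\ge 3$. The hypothesis $\mchar k\nmid(n-1)$ is not used here.

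Now let $P$ be a smooth point. A linear change of coordinates $A$ replaces $F$ by $F\circ A$ and multiplies $\mhess{F}$ by $(\det A)^2\neq 0$, hence preserves $C$, the curve $\mhess{C}$, and the property of being a flex point; so I may assume $P=(0:0:1)$ and that the tangent line of $C$ at $P$ is $Z\msetp{Y}$. Dehomogenising at $Z=1$ and putting $f(x,y)=F(x,y,1)$, the point $P$ is the origin, $f(0,0)=f_x(0,0)=0$ and $f_y(0,0)\neq 0$. Since $P$ is smooth, the tangent line is the only line through $P$ meeting $C$ with multiplicity at least $2$, so $P$ is a flex point exactly when $I_P\bigl(C,Z\msetp{Y}\bigr)=\operatorname{ord}_{x=0}f(x,0)\ge 3$, that is, exactly when the coefficient of $x^2$ in $f(x,0)=F(x,0,1)$ vanishes.

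It remains to compute $\mhess{F}(P)$. Because the tangent line is $Z\msetp{Y}$ we have $F_X(P)=F_Z(P)=0$ and $F_Y(P)\neq 0$, so evaluating $Mv=(n-1)(F_X,F_Y,F_Z)^{\mathrm T}$ at $P$, where $v(P)=(0,0,1)^{\mathrm T}$, shows that the last column — and by symmetry the last row — of $M(P)$ equals $(0,(n-1)F_Y(P),0)^{\mathrm T}$. Expanding
\begin{gather*}
\det M(P)=\det\begin{pmatrix}
F_{XX}(P) & F_{XY}(P) & 0\\
F_{XY}(P) & F_{YY}(P) & (n-1)F_Y(P)\\
0 & (n-1)F_Y(P) & 0
\end{pmatrix}
\end{gather*}
along the last row gives $\mhess{F}(P)=-(n-1)^2F_Y(P)^2\,F_{XX}(P)$. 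By the hypothesis $\mchar k\nmid(n-1)$ the scalar $-(n-1)^2F_Y(P)^2$ is a unit, so $P\in\mhess{C}$ if and only if $F_{XX}(P)=0$. The delicate point — which I expect to be the main obstacle — is the final identification of this with the flex condition: $F_{XX}(P)$ is twice the coefficient of $x^2$ in $f(x,0)$, so one must know that this coefficient vanishes if and only if $F_{XX}(P)=0$. In the case $n=3$ relevant to the rest of the paper the hypothesis $\mchar k\nmid(n-1)$ reads $\mchar k\neq 2$, which is precisely what makes this equivalence valid; in characteristic two the plain Hessian does not behave well, which is why a modified Hessian is introduced and treated separately in the later sections.
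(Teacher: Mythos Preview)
The paper does not give a proof at all; it simply cites Fulton's Exercise~5.23, which is stated over a field of characteristic zero. Your argument via Euler's identity and a linear change of coordinates is the standard one and is essentially what Fulton has in mind, so in spirit you have supplied exactly the proof the paper defers to.

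Your proof is correct whenever $\mchar k\neq 2$, and you correctly isolate the passage from $F_{XX}(P)=0$ to $c_2=0$ as the step requiring this. What you may not have noticed is that this is not merely a limitation of your method but of the statement itself: in characteristic two one has $F_{XX}=F_{YY}=F_{ZZ}=0$ identically (since $i(i-1)\equiv 0\pmod 2$ for every $i$), and the determinant of a symmetric $3\times 3$ matrix with zero diagonal equals $2F_{XY}F_{XZ}F_{YZ}=0$. Thus $\mhess{F}\equiv 0$ in characteristic two for \emph{every} degree~$n$, and the proposition as literally stated would force every point of $C$ to be a flex, which is false (take e.g.\ $n=4$, where $\mchar k\nmid(n-1)$ does not exclude characteristic two). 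The paper itself remarks on this vanishing in its characteristic-two section. Since the paper only applies the proposition with $n=3$, where $\mchar k\nmid(n-1)$ reads $\mchar k\neq 2$, your argument covers precisely what is needed, and your closing remark about the modified Hessian is on point.
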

\begin{proof}See \cite[Exercise 5.23]{fulton2008}.
\end{proof}

From now on we will only work with curves of degree three, so the proposition above is only usable for fields $k$ of characteristic different from two. This is the reason for why we exclude characteristic two for now; see Section~\ref{hpha:sec:char2} for the excluded case.

\begin{Corollary}\label{corthree} If $P$ is a flex point on $C$, then it is also a point on the Hesse pencil $\mathcal{C}$ and it is again a flex point of each curve of the Hesse pencil.
\end{Corollary}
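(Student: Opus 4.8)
The plan is to leverage Proposition \ref{two}: since we work with cubics ($n=3$) and $\mchar{k}\neq 2$, a point $P$ on $C$ is a flex point precisely when $P\in C\cap\mhess{C}$. So it suffices to show that if $F(P)=0$ and $\mhess{F}(P)=0$, then $P$ lies on every curve $C_{t_0}$ of the Hesse pencil and is a flex point there as well.

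First I would handle membership. For $t_0\neq\infty$ the curve $C_{t_0}$ is cut out by $t_0F+\mhess{F}$, and $\bigl(t_0F+\mhess{F}\bigr)(P)=t_0F(P)+\mhess{F}(P)=0$, while $C_\infty=C$ contains $P$ by hypothesis; hence $P$ lies on $\mathcal{C}$, viewed over $k(t)$, and on each reduction $C_{t_0}$. (One should note that $\mhess{F}$ for a cubic has degree $3n-6=3$, so $t_0F+\mhess{F}$ is genuinely a cubic form whenever it is nonzero, which is the relevant case.)

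Next I would show $P$ remains a flex point of $C_{t_0}$. By Proposition \ref{two} applied to $C_{t_0}$, this amounts to checking $\mhess{F}(P)$ and $\mhess{t_0F+\mhess{F}}(P)$ both vanish — the first is given. For the second, the key computation is that the Hessian of the Hesse pencil is again a linear combination of $F$ and $\mhess{F}$: there exist forms, in fact polynomials in $t_0$ with coefficients depending only on $F$, such that $\mhess{t_0F+\mhess{F}} = \alpha(t_0)F + \beta(t_0)\mhess{F}$ for suitable $\alpha,\beta\in k[t_0]$. This is the classical fact that the Hessian of a plane cubic lies in the pencil spanned by the cubic and its Hessian; granting it, evaluating at $P$ gives $\alpha(t_0)F(P)+\beta(t_0)\mhess{F}(P)=0$, so $P\in C_{t_0}\cap\mhess{C_{t_0}}$ and is therefore a flex point of $C_{t_0}$.

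The main obstacle is justifying the displayed identity $\mhess{t_0F+\mhess{F}}\in\langle F,\mhess{F}\rangle$ cleanly and in a way valid in every characteristic other than two and three. One route is to invoke the $\mpgl{3}{k}$-equivariance of the Hessian together with the normal form: any smooth cubic can be brought into Hesse form $x^3+y^3+z^3-3\lambda xyz$, for which $\mhess{F}$ is a known explicit cubic in the same pencil, and the general statement follows by continuity/specialization and invariance under linear coordinate change (covariance of the Hessian is exactly the content of the forthcoming Section on linear changes of coordinates). Alternatively, one can argue more directly: the flex points of $C$ are the nine base points of the Hesse pencil, every member of the pencil passes through them, and since being a flex is detected set-theoretically by Proposition \ref{two} plus the fact that a smooth cubic has exactly nine flexes, the nine common points must be exactly the flexes of each smooth member — handling singular members as limits. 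I expect the first route to be the more robust one to write out, since it does not require separating smooth from singular fibers.
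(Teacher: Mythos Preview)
Your overall strategy matches the paper's exactly: reduce via Proposition~\ref{two} to the single identity
\[
\mhess{tF+\mhess{F}}=\alpha F+\beta\,\mhess{F}
\]
for suitable $\alpha,\beta\in k[t]$, then evaluate at $P$. The paper establishes this identity in the most direct way possible: a symbolic computation (carried out in Magma) on a generic ternary cubic. Since the claimed relation is a polynomial identity in the coefficients of $F$ and in $t$, this single computation settles it over any base with $\mchar{k}\neq 2,3$, and the obstacle you flag simply disappears.

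Of your two suggested routes to the identity, the first (reduce to Hesse normal form using covariance of the Hessian, then extend by Zariski density of smooth cubics) is sound but considerably more elaborate than the one-line verification the paper uses. Your second route, however, has a gap as written: knowing that the nine flexes of $C$ lie on every $C_{t_0}$ does not by itself force them to be flexes of $C_{t_0}$. For that you would still need $\mhess{C_{t_0}}$ to pass through those nine points, which is precisely the pencil membership $\mhess{t_0F+\mhess{F}}\in\langle F,\mhess{F}\rangle$ you are trying to prove; so the counting argument is circular unless supplemented by that identity.
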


This is a well-known and old result in the case of $F=X^3+Y^3+Z^3$, see for example \cite[Section~VII.1]{pascal1902}.

\begin{proof}A computation using Magma~\cite{bosma1997} shows that
\begin{gather*}
\mhess{tF+\mhess{F}} =\alpha F+\beta \mhess{F}
\end{gather*}
with $\alpha,\beta\in k\msets{t}$.

Assume that $P$ is a flex point, then $P\in C\cap\mhess{C}$ by Proposition~\ref{two}, that is $F\msetp{P}=0$ and $\mhess{F}\msetp{P}=0$. So $\msetp{tF+\mhess{F}}\msetp{P}=0$, which implies that $P\in\mathcal{C}$. The computation above also implies that
\begin{gather*}
\mhess{tF+\mhess{F}}\msetp{P}=0,
\end{gather*}
that is $P\in\mhess{\mathcal{C}}$. Therefore $P\in\mathcal{C}\cap\mhess{\mathcal{C}}$. Hence $P$ is a flex point on $\mathcal{C}$ by Proposi\-tion~\ref{two}.
\end{proof}

\begin{Corollary}\label{hpga:cor:cphpfp}
Let $P\in C_{t_0}\cap C_{t_1}$. If $t_0\neq t_1$, then $P$ is a flex point on~$C$.
\end{Corollary}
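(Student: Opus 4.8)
The plan is to reduce the statement to the criterion of Proposition~\ref{two}. Since we are dealing with a cubic, $n=3$, and $\mchar{k}\neq 2$, so $\mchar{k}\nmid(n-1)$; hence a point of $C$ is a flex point exactly when it also lies on the Hessian curve $\mhess{C}=Z\msetp{\mhess{F}}$. Thus it suffices to show that $F\msetp{P}=0$ and $\mhess{F}\msetp{P}=0$, i.e.\ that $P$ is a base point of the pencil $\mseta{F,\mhess{F}}$.

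To obtain these two vanishings I would exploit that $P$ lies on two \emph{distinct} members of the Hesse pencil. First I would dispose of the exceptional fibre: if, say, $t_1=\infty$, then $C_{t_1}=C_\infty=C=Z\msetp{F}$ gives $F\msetp{P}=0$ at once, and then $P\in C_{t_0}=Z\msetp{t_0F+\mhess{F}}$ forces $\mhess{F}\msetp{P}=0$. Otherwise $t_0$ and $t_1$ are distinct elements of $k$, and membership of $P$ in $C_{t_0}$ and $C_{t_1}$ gives the two $k$-linear relations $t_0F\msetp{P}+\mhess{F}\msetp{P}=0$ and $t_1F\msetp{P}+\mhess{F}\msetp{P}=0$ on the pair $\msetp{F\msetp{P},\mhess{F}\msetp{P}}$; subtracting yields $\msetp{t_0-t_1}F\msetp{P}=0$, so $F\msetp{P}=0$ because $t_0\neq t_1$, and then $\mhess{F}\msetp{P}=0$ as well. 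In every case $P\in C\cap\mhess{C}$, and Proposition~\ref{two} concludes that $P$ is a flex point on $C$.

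I do not expect any genuine obstacle here: the mathematical content is simply that two distinct points of $\mps{1}{k}$ span the pencil $\mseta{F,\mhess{F}}$, so a point lying on two of its fibres must lie on $Z\msetp{F}\cap Z\msetp{\mhess{F}}$. The only points requiring a little care are the separate treatment of the fibre at $\infty\in\mps{1}{k}$ and ensuring the standing hypothesis $\mchar{k}\neq 2$ is in force when invoking Proposition~\ref{two}.
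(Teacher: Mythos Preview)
Your proof is correct and follows the same approach as the paper: show $F(P)=0$ and $\mhess{F}(P)=0$, then invoke Proposition~\ref{two}. The only cosmetic difference is that the paper treats $t_0,t_1\in\mps{1}{k}$ uniformly via homogeneous coordinates $t_i=(t_{i0}:t_{i1})$ and the invertibility of the $2\times 2$ matrix $\begin{pmatrix} t_{00}&t_{01}\\ t_{10}&t_{11}\end{pmatrix}$, thereby avoiding your separate case $t_1=\infty$.
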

\begin{proof}
Suppose that $t_0=\msetp{t_{00}:t_{01}}$ and $t_1=\msetp{t_{10}:t_{11}}$, then
\begin{gather*}
\begin{pmatrix}
t_{00} & t_{01} \\
t_{10} & t_{11}
\end{pmatrix}
\begin{pmatrix}
F\msetp{P} \\
\mhess{F}\msetp{P}
\end{pmatrix}
=
\begin{pmatrix}
0 \\
0
\end{pmatrix},
\end{gather*}
with the matrix being invertible since $t_0\neq t_1$. Thus $F\msetp{P}=0$ and $\mhess{F}\msetp{P}=0$, that is $P\in C\cap\mhess{C}$. Hence Proposition~\ref{two} implies that $P$ is a flex point on $C$.
\end{proof}

\section{The 3-torsion group}\label{hpga:sec:3tgrp}

Let $E=Z\msetp{F}$ be an elliptic curve with unit element $O$ and $F\in k\msets {X,Y,Z}$ homogeneous of degree 3. Recall the following well known fact.

\begin{Proposition}Let $S$ and $T$ be points on $E$. If $S$ is a flex point, then $T$ is a flex point if and only if $S-T\in E\msets{3}$.
\end{Proposition}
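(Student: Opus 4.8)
The plan is to derive this from the classical characterization of flex points on a smooth plane cubic as a single fibre of multiplication by $3$ in the group law. Throughout, $E$ is smooth (being an elliptic curve), so its flex points are honest flexes and the chord--tangent construction defines the group law with unit $O$.

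First I would recall the geometric group law: for $A,B\in E$ let $A*B$ denote the third point in which the line through $A$ and $B$ (the tangent line at $A$ when $A=B$) meets $E$, counted so that B\'ezout's theorem holds, and set $A+B=O*(A*B)$ (a classical fact; see, e.g., \cite{fulton2008}). A short computation with this definition shows that whenever $A,B,C$ lie on a line, with the multiplicities prescribed by B\'ezout, one has $A+B+C=c$, where $c:=O*O$ is independent of the chosen line; if $O$ happens to be a flex, then $c=O$.

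Next I would translate the flex condition: $P\in E$ is a flex if and only if the tangent line at $P$ meets $E$ only at $P$, i.e., with intersection multiplicity three, which is exactly the condition $P*P=P$. If $P$ is a flex, then the triple $(P,P,P)$ is ``collinear'' in the sense above, so $3P=P+P+P=c$. Conversely, if $3P=c$, write $R=P*P$ for the third intersection of the tangent at $P$ with $E$; then $2P+R=c=3P$, hence $R=P$, so the tangent meets $E$ only at $P$ and $P$ is a flex. Thus $P$ is a flex if and only if $3P=c$.

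The proposition then follows formally. Since $S$ is a flex, $3S=c$. If $T$ is a flex as well, then $3T=c=3S$, so $3(S-T)=O$, i.e., $S-T\in E\msets{3}$; conversely, if $S-T\in E\msets{3}$ then $3T=3S=c$, and the equivalence just established shows that $T$ is a flex. The only step demanding real care is the first one, namely the bookkeeping of intersection multiplicities in B\'ezout's theorem together with the verification that $A+B+C$ does not depend on the line (here one uses that $(R)\sim(P)$ forces $R=P$ on a curve of positive genus); everything after that is purely formal manipulation in the group $E$.
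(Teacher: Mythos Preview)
Your argument is correct, and it reaches the same conclusion as the paper---namely, that a point $P$ is a flex if and only if $3P$ equals a fixed element of $E$---but the route is different. The paper computes the divisor of the rational function $L_S/L_T$ (the ratio of the tangent lines at $S$ and $T$): if $T$ is a flex this divisor is $3(S)-3(T)$, and if not it is $3(S)-2(T)-(T')$ with $T'\neq T$; invoking \cite[Corollary~III.3.5]{silverman1985} then gives $3S=3T$ in the first case and $3S-3T=T'-T\neq O$ in the second. You instead work directly with the chord--tangent operation $*$, show that collinear triples satisfy $A+B+C=O*O=:c$, and hence that $P$ is a flex $\Leftrightarrow P*P=P \Leftrightarrow 3P=c$; the proposition then drops out by subtracting. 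Your approach is more elementary in that it avoids divisors and the Abel--Jacobi correspondence altogether, relying only on B\'ezout and cancellation in the group; the paper's approach is a touch slicker because the single function $L_S/L_T$ handles both directions at once without ever naming the constant $c$. One small remark: your parenthetical about ``$(R)\sim(P)$ forces $R=P$'' is not actually needed for the step where you use it (the equality $R=P$ already follows from $2P+R=3P$ in the group), though of course that principle underlies the well-definedness of the group law itself.
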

\begin{proof}Let $L_S$ and $L_T$ be the tangent lines to $E$ at $S$ and $T$ respectively. Assume that $T$ is also a flex point. Consider the function $\frac{L_S}{L_T}$ on $E$ which has divisor $3\msetp{S}-3\msetp{T}$. From \cite[Corollary~III.3.5]{silverman1985} it follows that $3S-3T=O$. Hence $S-T\in E\msets{3}$.

Assume that $T$ is not a flex point. Now the divisor of the function $\frac {L_S}{L_T}$ is $3\msetp{S}-2\msetp{T}-\msetp{T^\prime}$ with $T^\prime\neq T$. From this it follows that $3S-2T-T^\prime=O$, thus $3S-3T=T^\prime-T\neq O$. Hence $S-T\notin E\msets{3}$.
\end{proof}

This result tells us that if $O$ is a flex point on $E$, then the concepts of flex point and 3-torsion point coincide. In the previous section we learned that a flex point on $E$ is also a flex point on $\mathcal{E}=\mathcal{Z}(tF+\mhess{F})$. Hence if we combine these statements, then we obtain $E\msets{3}\subset\mathcal{E}\msets{3}$. Since the characteristic of $k$ is different from three, these sets are equal in size, thus the same. Moreover suppose that $E_{t_0}$ for some $t_0\in\mps{1}{k}$ is non-singular. Provide $\mathcal{E}$ and $E_{t_0}$ with a group structure by taking $O$ as the unit element. Since the flex points of $\mathcal{E}$ (considered as a plane cubic over $k(t)$) and of $E_{t_0}$ (a cubic curve over $k$) are the same and a line that intersects an elliptic curve at two flex points will also intersect the curve at a third flex point, the group structures on $\mathcal{E}\msets{3}$ and $E_{t_0}\msets{3}$ are equal as well.

Recall that if the unit element $O$ is a flex point on $E$, then we can find a~projective linear transformation in $\mpgl{3}{k}$ such that $E$ is given by a~Weierstrass equation in the new coordinates. Moreover since the characteristic of $k$ is different from two and three, we may even assume that $E\colon y^2=x^3+ax+b$ for some $a,b\in k$.

\section{The Weil-pairing}\label{hpga:sec:wp}

In the previous section we saw that $\mathcal{E}\msets{3}=E_{t_0}\msets{3}$ for all $t_0\in\mps{1}{\mac{k}}$ such that $E_{t_0}$ is non-singular. Denote the Weil-pairing on the 3-torsion of $\mathcal{E}$ by $e_3$ and on the 3-torsion of $E_{t_0}$ by $e_3^{t_0}$. An introduction to Weil-pairings can be found in \cite[Section~III.8]{silverman1985} and \cite[Sections~3.3 and~11.2]{washington2008}.

\begin{Proposition}Let $E$ be an elliptic curve given by a Weierstrass equation and let $\mathcal {E}$ be its Hesse pencil. The Weil-pairings~$e_3$ and $e_3^{t_0}$ on $E\msets {3}$ are equal.
\end{Proposition}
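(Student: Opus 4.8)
The plan is to exploit the identification $\mathcal{E}\msets{3}=E_{t_0}\msets{3}$ established in the previous section, together with the fact (proven there) that the two groups have the same group structure, and then to argue that the Weil pairing is rigid enough to be forced by the group structure plus one normalization. Recall that $e_3\colon \mathcal{E}\msets{3}\times\mathcal{E}\msets{3}\to\mu_3$ is a non-degenerate, bilinear, alternating pairing, and likewise for $e_3^{t_0}$; since $\mathcal{E}\msets{3}\cong(\mzz/3)^2$, the space of such pairings is one-dimensional over $\mff_3$ (an alternating bilinear form on a two-dimensional space is determined by its value on a single basis $\{S,T\}$, and non-degeneracy says that value is a primitive cube root of unity). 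So $e_3$ and $e_3^{t_0}$ differ at most by raising to a power: $e_3^{t_0}(\cdot,\cdot)=e_3(\cdot,\cdot)^{m}$ for some $m\in\{1,2\}$, and I must rule out $m=2$, i.e., rule out that passing from $\mathcal{E}$ to $E_{t_0}$ inverts the pairing.

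The key step is therefore to pin down the sign. The cleanest way I see is to use continuity/specialization: work over $k(t)$ and reduce at a well-chosen place. First I would note that it suffices to treat the generic member $\mathcal{E}$ over $k(t)$ versus $E=E_\infty$ itself, since both $e_3$ and $e_3^{t_0}$ for finite $t_0$ are obtained by reducing the $k(t)$-pairing $e_3$ at the place $t=t_0$, and reduction of pairings commutes with the pairing (the Weil pairing is compatible with specialization of the elliptic curve, as long as the reduced curve is still smooth with the same $3$-torsion). Concretely: $\mathcal{E}$ over $k(t)$ has good reduction at $t=\infty$ giving back $E$, with the $3$-torsion points reducing isomorphically onto $E\msets{3}$ (this is exactly the set-theoretic and group-theoretic identification of the previous section). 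By compatibility of the Weil pairing with this reduction, $e_3$ on $\mathcal{E}\msets{3}$ reduces to the Weil pairing on $E\msets{3}$; but the $3$-torsion points of $\mathcal{E}$ and of $E$ literally coincide as points of $\mpp^2$ with the same coordinates, and the identification is the identity map, so $e_3=e_3^\infty$ as pairings on the common group $E\msets{3}$. Since every finite $E_{t_0}$ arises the same way from the single $k(t)$-curve $\mathcal{E}$ with its single pairing $e_3$, we get $e_3^{t_0}=e_3$ for all $t_0$ as well.

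I expect the main obstacle to be making the "compatibility of the Weil pairing with specialization" step airtight in an elementary fashion consistent with the paper's self-contained style. One safe route, avoiding any reduction machinery, is to write the Weil pairing via an explicit formula valid in families: for $S,T\in\mathcal{E}\msets{3}$ pick rational functions $f_S,f_T$ on $\mathcal{E}$ with $\mdiv{f_S}=3(S)-3(O)$ and $\mdiv{f_T}=3(T)-3(O)$ (these exist since $S,T$ are $3$-torsion; in fact one may take ratios of the tangent lines at flex points, exactly as in the Proposition in Section~\ref{hpga:sec:3tgrp}), and then $e_3(S,T)=\big(f_S(T+R)/f_S(R)\big)\big/\big(f_T(S-R)/f_T(-R)\big)$ for suitable auxiliary $R$, or more simply $e_3(S,T)=f_S(T)/f_T(S)$ with the standard normalization when the supports are disjoint. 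Because the flex points and their tangent lines vary algebraically in $t$ — indeed, by Corollary~\ref{corthree} the flex points are literally constant in the pencil — these functions and hence the pairing value are given by one and the same algebraic expression for $\mathcal{E}$ and for each $E_{t_0}$; evaluating at $t=t_0$ recovers $e_3^{t_0}$. Thus the verification reduces to the observation that the defining data (flex points, tangent lines) do not depend on $t$, which is precisely Corollary~\ref{corthree}, and the conclusion $e_3=e_3^{t_0}$ follows. The only genuine care needed is choosing the auxiliary point $R$ (or arranging disjoint supports) uniformly in $t$, which one can do after a harmless base change since there are only finitely many bad $t_0$.
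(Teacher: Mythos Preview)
Your second route is essentially the paper's argument: express $e_3(S,T)$ as an explicit rational expression in the tangent lines at the flex points $O,S,T,-T$, then specialize at $t=t_0$ to obtain $e_3^{t_0}(S,T)$, and conclude since $e_3(S,T)$ is a root of unity and hence constant. So the overall strategy matches.

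Two points where your write-up drifts from a correct argument and from the paper. First, your concluding sentence asserts that ``the defining data (flex points, tangent lines) do not depend on $t$'', citing Corollary~\ref{corthree}. That corollary only gives constancy of the flex \emph{points}; the tangent lines to $\mathcal{E}$ at those points genuinely vary with $t$ (already in the classical pencil $x^3+y^3+z^3+\lambda xyz$ the tangent at $(1{:}-1{:}0)$ is $3X+3Y-\lambda Z=0$). This contradicts your own earlier, correct, phrase ``vary algebraically in $t$''. The paper's care is exactly here: it chooses equations for $L_O,L_S,L_T,L_{-T}$ over $k(S,T)[[s]]$ that remain nonzero modulo the uniformizer $s$ at $t_0$, observes that their reductions are the tangent lines to $E_{t_0}$, and checks each factor such as $L_O(T)$ is a unit so that the whole ratio reduces term by term. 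Your sketch elides precisely this verification.

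Second, the disjoint-support issue: rather than an auxiliary $R$ and a base change, the paper fixes this once and for all by taking $D_S=(S)-(O)$ but $D_T=2(T)-2(-T)\sim (T)-(O)$, so that $D_S$ and $D_T$ have disjoint support, and then $f_S=L_S/L_O$, $f_T=(L_T/L_{-T})^2$. This yields a single closed formula
\[
e_3(S,T)=\left(\frac{L_S(T)\,L_O(-T)\,L_T(O)\,L_{-T}(S)}{L_O(T)\,L_S(-T)\,L_{-T}(O)\,L_T(S)}\right)^{2}
\]
valid simultaneously on $\mathcal{E}$ and (after reduction) on each $E_{t_0}$, with no auxiliary choices and no exceptional $t_0$ to exclude. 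Your preliminary reduction to ruling out $e_3^{t_0}=e_3^{2}$ is correct but ultimately unused: once the explicit formula is in place the equality is immediate.
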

\begin{proof}Let $S,T\in E\msets{3}$ generate $E\msets{3}$. The Weil-pairing is determined by its value on $\msetp{S,T}$. Follow \cite[Exercise~3.16]{silverman1985} to construct the Weil-pairings. Recall that $O$ is a flex point on $E$.

Let $L_O$, $L_S$, $L_T$ and $L_{-T}$ be the tangent lines to $\mathcal{E}$ at $O$, $S$, $T$ and $-T$ respectively. Define $D_S=\msetp{S}-\msetp{O}$ and $D_T= 2\msetp{T}-2\msetp{-T}$. Notice that $D_S$ and $D_T$ have disjoint support. Since $2T-2\msetp{-T}=T$ in $\mathcal{E}$, it follows that $D_T\sim\msetp{T}- \msetp{O}$. Consider the functions $f_S=\frac{L_S}{L_O}$ and $f_T=\big(\frac{L_T}{L_{-T}}\big)^2$, then $\mdiv{f_S}=3D_S$ and $\mdiv{f_T}=3D_T$. The Weil-pairing on $\mathcal{E}$ is defined as
\begin{gather*}
e_3\msetp{S,T}=\frac{f_S\msetp{D_T}}{f_T\msetp{D_S}} =\msetp{\frac{f_S\msetp{T}}{f_S\msetp{-T}}}^2\frac{f_T\msetp{O}}{f_T\msetp{S}}
=\msetp{\frac{L_S\msetp{T}L_O\msetp{-T}L_T\msetp{O}L_{-T}\msetp{S}}{L_O\msetp{T}L_S\msetp{-T}L_{-T}\msetp{O}L_T\msetp{S}}}^2.
\end{gather*}

Let $s\in k\msetp{S,T}\msetp{t}$ be a local coordinate at $t_0$. Choose the equations of the tangent lines such that they are also defined over $k\msetp{S,T}\msetss{s}$ and are non-zero modulo $s$. Notice that $L_O$, $L_S$, $L_T$ and $L_{-T}$ modulo $s$ are tangent lines to $E_{t_0}$ at $O$, $S$, $T$ and $-T$ respectively. Follow the construction above to obtain the Weil-pairing $e_3^{t_0}\msetp{S,T}$ on $E_{t_0}$.

Now $L_O\msetp{T}$ is a unit in $k\msetp{S,T}\msetss{s}$, because $T$ is not contained in the tangent line $L_O$ modulo~$s$ to~$E_{t_0}$ at~$O$. Similarly the other terms in the expression of $e_3\msetp{S,T}$ are units as well. Thus by construction $e_3\msetp{S,T}\!\!\mod s= e_3^{t_0}\msetp{S,T}$. Recall that $e_3\msetp{S,T}$ is a root of unity. Hence $e_3=e_3^{t_0}$.
\end{proof}

\section{The Weierstrass form}\label{hpga:sec:wf}

\begin{Proposition}\label{prop7}Let $E$ be an elliptic curve given by the Weierstrass equation $y^2z=x^3+axz^2+bz^3$ with $a,b\in k$. Then the Hesse pencil $\mathcal{E}$ can be given by
\begin{gather*}
ty^2z+3xy^2=tx^3-3ax^2z+\msetp{at-9b}xz^2+ \big(bt+a^2\big)z^3
\end{gather*}
over $k\msetp{t}$. The linear change of coordinates
\begin{gather*}
\begin{pmatrix}
x \\ y \\ z
\end{pmatrix}
=A\begin{pmatrix}
\xi \\ \eta \\ \zeta
\end{pmatrix}
\qquad\text{with}\qquad
A=\begin{pmatrix}
t & 0 & 3at^2-27bt-9a^2 \\
0 & 1 & 0 \\
-3 & 0 & t^3+9at-27b
\end{pmatrix}
\end{gather*}
transforms this into the Weierstrass form $\mathcal{E}^W\colon \eta^2\zeta=\xi^3+a_t \xi\zeta^2+b_t\zeta^3$, with
\begin{gather*}
a_t = at^4-18bt^3-18a^2t^2+54abt-\big(27a^3+243b^2\big), \\
b_t = bt^6+4a^2t^5-45abt^4+270b^2t^3+135a^2bt^2 +\big(108a^4+486ab^2\big)t-\big(243a^3b+1458b^3\big).
\end{gather*}
Moreover $\Delta\msetp{\mathcal{E}^W}=\Delta\msetp{E}\msetp{\det A}^3$ and $\det A=t^4+18at^2-108bt-27a^2$.
\end{Proposition}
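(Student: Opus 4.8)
The plan is to establish all four assertions by explicit computation, exploiting the fact that the Hesse-pencil equation of a Weierstrass curve is already close to a Weierstrass equation. Write $F=x^3+axz^2+bz^3-y^2z$, so $E=Z(F)$. For the shape of the pencil, form the $3\times 3$ matrix of second partial derivatives of $F$ and expand its determinant; a short calculation gives $\mhess{F}=8\big(a^2z^3-3xy^2-3ax^2z-9bxz^2\big)$. Since $\mchar{k}\neq 2$ one may rescale the Hessian (equivalently, reparametrise $t$), so the pencil $Z(tF+\mhess{F})$ over $k(t)$ may also be presented as $Z\big(tF+\tfrac18\mhess{F}\big)$; expanding $tF+\tfrac18\mhess{F}$ and transposing the terms involving $y$ yields precisely $ty^2z+3xy^2=tx^3-3ax^2z+(at-9b)xz^2+(bt+a^2)z^3$, which is the first claim.

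For the Weierstrass form, note that the left-hand side of this equation factors as $y^2(3x+tz)$, so the equation reads $y^2(3x+tz)=G(x,z)$ with $G$ the binary cubic on the right. I would therefore look for a substitution fixing $y=\eta$ and replacing $(x,z)^{\mathsf T}$ by $M(\xi,\zeta)^{\mathsf T}$ with $M\in\mgl{2}{k(t)}$ chosen so that (i) $3x+tz$ becomes a multiple of $\zeta$ — so that the term $y^2(3x+tz)$ becomes a scalar multiple of the Weierstrass term $\eta^2\zeta$ — and (ii) $G\circ M$ has no $\xi^2\zeta$ term. For (i) the first column of $M$ may be taken to be $(t,-3)^{\mathsf T}$, and then automatically $3x+tz=(\det M)\zeta$. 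Since $G_X(t,-3)=3\big(t^3+9at-27b\big)$ and $G_Z(t,-3)=-3\big(3at^2-27bt-9a^2\big)$, condition (ii) is met by taking the second column of $M$ to be $\big(3at^2-27bt-9a^2,\ t^3+9at-27b\big)^{\mathsf T}$; together with $\eta=y$ this is exactly the matrix $A$. After this substitution the equation becomes $(\det A)\,\eta^2\zeta=G(\xi P_1+\zeta P_2)$ with $P_1,P_2$ the columns of $M$; one checks that the $\xi^3$-coefficient $G(P_1)$ equals $\det A$, that the $\xi^2\zeta$-coefficient $\nabla G(P_1)\cdot P_2$ vanishes, and dividing through by the nonzero element $\det A\in k(t)$ leaves $\eta^2\zeta=\xi^3+a_t\xi\zeta^2+b_t\zeta^3$ with $(\det A)\,a_t=\nabla G(P_2)\cdot P_1$ and $(\det A)\,b_t=G(P_2)$. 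Expanding these two polynomials, and expanding the $3\times 3$ determinant $\det A$ along its second row to obtain $t^4+18at^2-108bt-27a^2$, confirms the displayed formulas; I would carry out these polynomial expansions with Magma, as in the proof of Corollary~\ref{corthree}.

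Finally, $\Delta(\mathcal E^W)=-16\big(4a_t^3+27b_t^2\big)$ by definition of the Weierstrass discriminant, so the last assertion reduces to the polynomial identity $4a_t^3+27b_t^2=\big(4a^3+27b^2\big)\big(t^4+18at^2-108bt-27a^2\big)^3$ in $k[a,b,t]$, again a finite machine-checkable computation; it reflects the classical fact that the Hesse pencil of $E$ has exactly four singular members, located at the zeros of $\det A$, each a triangle of lines. I anticipate no conceptual obstacle; the real work — and the only point demanding care — is keeping control over the size of the expansions for $\mhess{F}$, $a_t$, $b_t$ and the discriminant, and tracking the overall factor $\det A$ correctly through the coordinate change.
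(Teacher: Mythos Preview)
Your proposal is correct and follows essentially the same computational route as the paper: compute $\mhess{F}$ explicitly, rescale the parameter (the paper notes ``the $t$ in the proposition is equal to $8t$ in the previous sections''), and then find a linear change of coordinates putting the pencil in short Weierstrass form, with the remaining identities verified mechanically. The paper's own proof is a three-line sketch (map the tangent line at~$O$ to the line at infinity, rescale, then shift to kill the $x^2z$ term), whereas you give a cleaner and more explicit derivation of \emph{why} the matrix~$A$ has its particular shape --- the first column $(t,-3)^{\mathsf T}$ is forced by sending the tangent $3x+tz=0$ at~$O$ to $\zeta=0$, and the second column is determined by the orthogonality condition $\nabla G(P_1)\cdot P_2=0$ killing the $\xi^2\zeta$ term --- together with the pleasant check $G(P_1)=\det A$ that makes the division step transparent; but this is an elaboration of the same idea rather than a different argument.
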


Observe that the $t$ in the proposition is equal to $8t$ in the previous sections.

\begin{proof}The proof boils down to computing the map $A$, which can be found in three steps. First map the tangent line to $\mathcal{E}$ at $O$ to the line at infinity. Next scale the $z$-coordinate so that the coefficient in front of~$x^3$ and $y^2z$ are equal up to minus sign. Finally shift the $y$~-- resp.\ the $x$-coordinate so that the $xyz$, $yz^2$ resp.\ the $x^2z$ terms vanish.
\end{proof}

This proposition shows that
\begin{gather*}
j\msetp{\mathcal{E}}=1728\frac{4}{4a^3+27b^2}\msetp{\frac{at^4-18bt^3-18a^2t^2+54abt-\msetp{27a^3+243b^2}}{t^4+18at^2-108bt-27a^2}}^3.
\end{gather*}

\section{Linear change of coordinates I} \label{hpga:sec:3tlcc}

\begin{Proposition}\label{hpga:prop:pglme} Let $P_i\in\mps{2}{k}$ for $i=1,\dots,4$ be points such that no three of them are collinear. If $Q_i\in\mps{2}{k}$ for $i=1,\dots,4$ is another such set of points, then there exists a unique $A\in\mpgl{3}{k}$ such that $A\msetp{P_i}=Q_i$ for all $i=1,\dots,4$.
\end{Proposition}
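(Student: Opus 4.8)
This is the classical statement that $\mathrm{PGL}_3$ acts simply transitively on ordered projective frames of $\mpp^2$. I would prove it in two stages: first existence (every ``generic'' $4$-tuple can be moved to the standard frame), then uniqueness (the stabiliser of the standard frame is trivial), and finally assemble the two to get the claim for an arbitrary target tuple $Q_i$.

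\textbf{Reduction to a standard frame.} Fix the standard frame $R_1=(1:0:0)$, $R_2=(0:1:0)$, $R_3=(0:0:1)$, $R_4=(1:1:1)$. It suffices to show: for any $4$-tuple $P_1,\dots,P_4$ in general position (no three collinear) there is a unique $A\in\mpgl{3}{k}$ with $A(R_i)=P_i$. Indeed, granting this, if $B$ and $C$ are the unique elements sending the standard frame to $P_1,\dots,P_4$ and to $Q_1,\dots,Q_4$ respectively, then $CB^{-1}$ sends $P_i$ to $Q_i$; and if $A'$ is any other such map, then $B^{-1}CB^{-1}$ wait --- more cleanly, $C^{-1}A'B$ fixes the standard frame, hence equals the identity by uniqueness, so $A'=CB^{-1}$. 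Thus the whole proposition follows from the special case with target the standard frame.

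\textbf{Existence and uniqueness for the standard frame.} Pick column vectors $v_1,v_2,v_3\in k^3$ representing $P_1,P_2,P_3$. Since $P_1,P_2,P_3$ are not collinear, $v_1,v_2,v_3$ are linearly independent, so they form a basis; write a representative $v_4$ of $P_4$ as $v_4=\lambda_1 v_1+\lambda_2 v_2+\lambda_3 v_3$. The no-three-collinear hypothesis applied to $\{P_1,P_2,P_4\}$, $\{P_1,P_3,P_4\}$, $\{P_2,P_3,P_4\}$ forces each $\lambda_i\neq 0$. Then the matrix $A$ with columns $\lambda_1 v_1,\lambda_2 v_2,\lambda_3 v_3$ is invertible and satisfies $A(R_i)=P_i$ for $i=1,2,3$ and $A(R_4)=A(1,1,1)^{\mathrm t}=\lambda_1 v_1+\lambda_2 v_2+\lambda_3 v_3=v_4$, so $A$ represents the desired element of $\mpgl{3}{k}$. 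For uniqueness, suppose $A'\in\mpgl{3}{k}$ also sends $R_i$ to $P_i$; lift $A'$ to a matrix $M\in\mgl{3}{k}$. Then the $i$-th column of $M$ is a scalar multiple $\mu_i v_i$ of the representative $v_i$ for $i=1,2,3$, and $M(1,1,1)^{\mathrm t}=\mu_1 v_1+\mu_2 v_2+\mu_3 v_3$ must be a scalar multiple of $v_4=\lambda_1 v_1+\lambda_2 v_2+\lambda_3 v_3$; comparing coordinates in the basis $v_1,v_2,v_3$ gives $\mu_i/\lambda_i$ all equal, hence $\mu_1=\mu_2=\mu_3$, so $M$ is a scalar times $A$ and $A'=A$ in $\mpgl{3}{k}$.

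\textbf{Expected main obstacle.} There is no deep obstacle; the only point requiring a little care is the correct extraction of the non-collinearity hypothesis at each place it is used --- once to know $v_1,v_2,v_3$ are a basis, and three more times to know the coefficients $\lambda_i$ are nonzero (equivalently, that $P_4$ lies on none of the three coordinate lines through pairs of $P_1,P_2,P_3$). Everything else is elementary linear algebra over the field $k$; in particular no assumption on $\mchar{k}$ is needed.
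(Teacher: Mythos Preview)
Your argument is correct and is precisely the ``elementary linear algebra'' the paper alludes to without spelling out; the paper itself gives no proof beyond calling the result well-known. One small slip in the uniqueness step: from $\mu_i/\lambda_i$ all equal to a common value $c$ you cannot conclude $\mu_1=\mu_2=\mu_3$ (the $\lambda_i$ need not be equal); what follows instead is $\mu_i=c\lambda_i$, so the $i$-th column of $M$ is $c$ times the $i$-th column of $A$, hence $M=cA$ --- which is exactly the conclusion ``$M$ is a scalar times $A$'' you state next.
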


This is a well-known result which is easily proved using some elementary linear algebra. Observe that an analogous result holds for two sets of $n+2$ points in $\mps{n}{k}$ such that no $n+1$ of them lie on a hyperplane.

\begin{Proposition}\label{hpga:prop:3t2pl}Let $E$ be an elliptic curve given by a Weierstrass equation defined over $k$. If $E\msets{3}=\mseta{S,T}$, then any line in $\mps{2}{\mac{k}}$ contains at most two of the following points: $O$, $S$, $T$, $S+T$.
\end{Proposition}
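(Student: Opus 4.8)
The plan is to prove the contrapositive flavored statement directly: no line through two of the four points $O,S,T,S+T$ can contain a third of them. The key is that all four points lie in $E[3]$ (with $O$ the origin), so $O$, $S$, $T$ are flex points and hence so is $S+T$; in particular all four are smooth points of $E$. The crucial group-theoretic fact is that on a plane cubic, three points on $E$ are collinear if and only if their sum (in the group law with unit a flex point $O$) is $O$. So I would argue by cases on which three of the four points a hypothetical line $L$ contains.

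The case analysis is short. If $L$ contains $O$, $S$, $T$, then collinearity forces $O+S+T=O$, i.e., $S+T=O$, contradicting that $S,T$ generate $E[3]\cong(\mzz/3\mzz)^2$. If $L$ contains $O$, $S$, $S+T$, then $O+S+(S+T)=O$ gives $2S+T=O$, hence $T=-2S=S$ (since $3S=O$), again contradicting that $S,T$ are independent in $E[3]$. The case $O$, $T$, $S+T$ is symmetric, giving $S=T$. Finally if $L$ contains $S$, $T$, $S+T$ but not $O$, collinearity gives $S+T+(S+T)=O$, i.e., $2(S+T)=O$, so $S+T=-2(S+T)=S+T$ is automatic — wait, rather $2(S+T)=O$ together with $3(S+T)=O$ forces $S+T=O$, once more contradicting independence. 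In every case we reach a contradiction, so no line contains three of the four points.

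One subtlety to address cleanly: the statement is about lines in $\mps{2}{\mac k}$, so I should note that $S$ and $T$, hence $S+T$, may only be defined over $\mac k$, but the Proposition from Section~\ref{hpga:sec:3tgrp} and the collinearity criterion are geometric and apply over $\mac k$ without change; the Weierstrass form guarantees $O$ is a flex point and $E$ is smooth at $O$, and by the cited Proposition the other three points are flex points too, so the group-law/collinearity dictionary is valid at all four. I do not expect any genuine obstacle here; the only thing to be careful about is invoking the correct form of ``three collinear points sum to the unit'' — this is standard (e.g., \cite[Section~III.2]{silverman1985}) and consistent with the divisor computation already used in Section~\ref{hpga:sec:3tgrp}. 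The whole argument is elementary linear algebra in $(\mzz/3\mzz)^2$ once that dictionary is in place.
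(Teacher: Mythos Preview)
Your proposal is correct and follows essentially the same approach as the paper: assume a line contains three of the four points, use that $O$ is a flex point so collinear triples on $E$ sum to $O$, and derive a contradiction in $(\mzz/3\mzz)^2$. The paper's proof is simply terser, writing ``$P_1+P_2+P_3=O$ \dots\ is impossible for the points mentioned above'' without spelling out the four cases; your explicit case analysis (including the observation that $2(S+T)=O$ and $3(S+T)=O$ force $S+T=O$) is exactly what is hidden behind that sentence, so you should clean up the stream-of-consciousness ``wait'' and present the four cases directly.
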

\begin{proof}Suppose that $L$ is a line in $\mps{2}{\mac{k}}$ containing three of the points $O$, $S$, $T$ and $S+T$. Denote these by $P_1$, $P_2$ and $P_3$. Since $E$ is given by a Weierstrass equation, $O$ is a flex point, thus $P_1+P_2+P_3=O$. However this is impossible for the points mentioned above. Hence such a line $L$ does not exist.
\end{proof}

Suppose that we are given two elliptic curves $E$ and $E^\prime$ as in the proposition above with $E\msets{3}=\mseta{S,T}$ and $E^\prime\msets{3}=\mseta {S^\prime,T^\prime}$, then Propositions~\ref{hpga:prop:pglme} and~\ref
{hpga:prop:3t2pl} imply that there exists an $A\in\mpgl{3}{\mac{k}}$ such that $O\mapsto O^\prime$, $S\mapsto S^\prime$, $T\mapsto T^\prime$ and $S+T\mapsto S^\prime+T^\prime$ and that this $A$ is unique.

\section{Linear change of coordinates II}\label{hpga:sec:wp3tiie}

\begin{Proposition}\label{hpga:prop:wprigepgl}\label{prop10} Let $E$ and $E^\prime$ be elliptic curves given by a Weierstrass equation defined over~$k$. If $\phi\colon E\msets{3}\rightarrow E^\prime\msets{3}$ is an isomorphism which respects the Weil-pairings, then there exists some $t_0\in\mps{1}{\mac {k}}$ such that the fiber $E_{t_0}$ of the Hesse pencil of $E$ admits a linear change of coordinates $\Phi\colon E_{t_0}\rightarrow E^\prime$ with $\mres{\Phi}{E\msets{3}}=\phi$.
\end{Proposition}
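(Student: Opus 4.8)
The plan is to argue by counting over $\mac{k}$, as the introduction announces. Write $E\msets{3}=\mseta{S,T}$ and set $S'=\phi\msetp{S}$, $T'=\phi\msetp{T}$, so that $E'\msets{3}=\mseta{S',T'}$ and $\phi$ is determined by these two values. The first observation is that \emph{every} linear change of coordinates $\Psi\colon E_{t_0}\rightarrow E'$ with $\Psi\msetp{O}=O'$, where $t_0\in\mps{1}{\mac{k}}$ is such that $E_{t_0}$ is non-singular, restricts to a symplectic isomorphism $E\msets{3}\rightarrow E'\msets{3}$: such a~$\Psi$ is an isomorphism of elliptic curves, hence respects Weil-pairings, and by Sections~\ref{hpga:sec:3tgrp} and~\ref{hpga:sec:wp} the group $E_{t_0}\msets{3}$ equals $E\msets{3}$ with the same Weil-pairing. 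On the other hand, the symplectic isomorphisms $E\msets{3}\rightarrow E'\msets{3}$ form a~torsor under the group of symplectic automorphisms of $E\msets{3}$, which is isomorphic to $\msl{2}{\mff_3}$; since this set is non-empty (it contains $\phi$), it has exactly $\msetv{\msl{2}{\mff_3}}=24$ elements. It therefore suffices to produce $24$ pairs $\msetp{t_0,\Psi}$ as above whose restrictions to $E\msets{3}$ are pairwise distinct: the restriction map is then a bijection onto the symplectic isomorphisms, and in particular $\phi$ is one of them, which is exactly the statement.

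Distinctness follows from Section~\ref{hpga:sec:3tlcc}. If $\Psi_i\colon E_{t_i}\rightarrow E'$, $i=0,1$, are linear changes of coordinates with $\Psi_i\msetp{O}=O'$ and the same restriction to $E\msets{3}$, then $\Psi_1^{-1}\Psi_0\in\mpgl{3}{\mac{k}}$ fixes the four points $O$, $S$, $T$, $S+T$, which are in general position by Proposition~\ref{hpga:prop:3t2pl}; by Proposition~\ref{hpga:prop:pglme} it is the identity, so $\Psi_0=\Psi_1$ and $t_0=t_1$. To produce the pairs, note that $E_{t_0}\cong_{\mac{k}}E'$ exactly when $j\msetp{E_{t_0}}=j\msetp{E'}$, which forces $E_{t_0}$ to be non-singular, and that for such a~$t_0$ the linear changes of coordinates $E_{t_0}\rightarrow E'$ taking $O$ to $O'$ form a torsor under $\maut{E'}$. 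Hence the number of pairs equals $\msetv{\maut{E'}}$ times the number of $t_0\in\mps{1}{\mac{k}}$ with $j\msetp{E_{t_0}}=j\msetp{E'}$. By the explicit formula for $j\msetp{\mathcal{E}}$ of Section~\ref{hpga:sec:wf}, this is a rational function of $t$ of degree $12$ that is ramified only over $j=0$, $j=1728$ and $j=\infty$, with ramification index $3$, $2$ and $3$ there respectively, and it attains the value $\infty$ precisely at the singular fibres of $\mathcal{E}$. So the number of such $t_0$ is $12$, $6$ or $4$ according as $j\msetp{E'}\notin\msetb{0,1728}$, $j\msetp{E'}=1728$ or $j\msetp{E'}=0$; correspondingly $\msetv{\maut{E'}}$ is $2$, $4$ or $6$, and the product is $24$ in every case. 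This completes the count, and with it the proof.

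The main obstacle is precisely this count. One must read off from the explicit degree-$12$ rational function $j\msetp{\mathcal{E}}$ that its only branch points are $0$, $1728$ and $\infty$ with the stated ramification --- equivalently, that the four singular members of $\mathcal{E}$ all have $j=\infty$, while the fibres with $j=0$, resp.\ $j=1728$, are exactly those with extra automorphisms --- and one must check that this survives the degenerate situations in which $E$ itself has $j\msetp{E}\in\msetb{0,1728}$, so that the numerator or the denominator of $j\msetp{\mathcal{E}}$ drops in degree or acquires a repeated root. Granting this bookkeeping, the argument in fact identifies the linear change of coordinates realising $\phi$: by Propositions~\ref{hpga:prop:pglme} and~\ref{hpga:prop:3t2pl} it must be the unique element of $\mpgl{3}{\mac{k}}$ sending $O$, $S$, $T$, $S+T$ to $O'$, $S'$, $T'$, $S'+T'$, and consequently this element carries some fibre of the Hesse pencil of $E$ isomorphically onto $E'$.
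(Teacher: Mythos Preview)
Your argument is correct and follows the same counting strategy as the paper: exhibit $24$ linear isomorphisms from fibres of the Hesse pencil to $E'$, show that their restrictions to $E\msets{3}$ are pairwise distinct symplectic isomorphisms, and invoke Lemma~\ref{hpga:lem:cwpi} to conclude that $\phi$ must be among them.

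Two differences in execution are worth recording. For distinctness you argue directly that equal restrictions force equal elements of $\mpgl{3}{\mac{k}}$ (via Propositions~\ref{hpga:prop:pglme} and~\ref{hpga:prop:3t2pl}) and hence equal fibres; the paper instead pulls back a non-torsion point of $E'$ to land in $E_{t_i}\cap E_{t_j}$ and applies Corollary~\ref{hpga:cor:cphpfp}. Your route is marginally cleaner and does not need that corollary. For the count of $t_0$ with $j\msetp{E_{t_0}}=j\msetp{E'}$ you appeal to the ramification pattern of the degree-$12$ map $j\msetp{\mathcal{E}}\colon\mps{1}{}\to\mps{1}{}$, whereas the paper obtains the same numbers by computing, case by case, the discriminant of an explicit polynomial $G$ (of degree $12$, $11$, $4$, or $6$ according as $j\msetp{E'}$ is generic, equal to $j\msetp{E}\neq 0,1728$, equal to $0$, or equal to $1728$). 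Your final paragraph correctly identifies that the ramification claim, including its persistence when $j\msetp{E}\in\msetb{0,1728}$ causes the degree of $a_t$ or $b_t$ to drop, is precisely what those discriminant computations are there to certify. One further point you leave implicit but the paper makes explicit: that an abstract isomorphism $E_{t_0}\to E'$ is indeed realised by an element of $\mpgl{3}{\mac{k}}$. The paper secures this by first passing to the Weierstrass model $E_{t_0}^W$ via the linear map $A_{t_0}$ of Proposition~\ref{prop7}, and then using that isomorphisms between short Weierstrass curves are themselves linear.
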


The essence of the proof of this proposition is the following: We determine the $t_i\in\mps{1}{\mac{k}}$ for which the $j$-invariant of $E_{t_i}$ is equal to the $j$-invariant of $E^\prime$. For each of these $t_i$'s we obtain a number of linear changes of coordinates $E_{t_i}\rightarrow E$. A counting argument shows that $\phi$ is the restriction of one of those maps. The following observation is used in the counting argument:

\begin{Lemma}\label{hpga:lem:cwpi} Let $E$ and $E^\prime$ be elliptic curves. Then $24$ out of the $48$ isomorphisms $E \msets{3}\rightarrow E^\prime\msets{3}$ respect the Weil-pairings.
\end{Lemma}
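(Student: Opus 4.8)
The plan is to count, among the $48$ isomorphisms $E[3]\to E'[3]$, exactly how many preserve the Weil-pairing values. First I would recall that $E[3]\cong(\mzz/3\mzz)^2$ as a group, so an isomorphism $\phi\colon E[3]\to E'[3]$ is determined by a choice of basis: pick generators $S,T$ of $E[3]$ and generators $S',T'$ of $E'[3]$, then there are $|\mgl{2}{\mzz/3\mzz}|=48$ ways to send the ordered basis $(S,T)$ to an ordered basis of $E'[3]$. This gives the $48$ in the statement. The Weil-pairing $e_3$ on $E[3]$ is an alternating non-degenerate bilinear form with values in $\mu_3$; fixing a primitive cube root of unity $\zeta$, writing $e_3(S,T)=\zeta^{c}$ and $e_3'(S',T')=\zeta^{c'}$ with $c,c'\in(\mzz/3\mzz)^\times$, the pairing is nothing but the standard symplectic form scaled by $c$ (resp.\ $c'$).

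The key computation is then: if $\phi$ corresponds to the matrix $M\in\mgl{2}{\mzz/3\mzz}$ relating the two chosen bases, then for all $P,Q\in E[3]$ one has $e_3'(\phi P,\phi Q)=e_3(P,Q)^{\,c'\,\det(M)/c}$ — more precisely the pairing $e_3'(\phi\,\cdot,\phi\,\cdot)$ equals $e_3(\cdot,\cdot)$ raised to the power $\det(M)$ up to the fixed scalar $c'/c$ coming from the choice of reference bases. Hence $\phi$ is symplectic (meaning $e_3'(\phi P,\phi Q)=e_3(P,Q)$ for all $P,Q$) if and only if $\det(M)$ equals a fixed prescribed element of $(\mzz/3\mzz)^\times$, namely $c/c'$. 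In $\mgl{2}{\mzz/3\mzz}$ the determinant map surjects onto $(\mzz/3\mzz)^\times\cong\{\pm1\}$, and the kernel $\msl{2}{\mzz/3\mzz}$ has order $24$, so each of the two fibers of $\det$ has exactly $24$ elements. Whichever value $c/c'$ takes, there are precisely $24$ matrices $M$ with that determinant, hence precisely $24$ symplectic isomorphisms.

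So the proof is essentially: (i) set up the bijection between isomorphisms $\phi$ and matrices $M\in\mgl{2}{\mzz/3\mzz}$ via fixed bases; (ii) record the bilinearity and alternating property of the Weil-pairing (from \cite[Section~III.8]{silverman1985}) to reduce ``$\phi$ respects the pairing'' to a single condition on $\det(M)$; (iii) observe $|\msl{2}{\mzz/3\mzz}|=24$ and $[\mgl{2}{\mzz/3\mzz}:\msl{2}{\mzz/3\mzz}]=2$, giving $24$ out of $48$. I expect the only mildly delicate point to be step (ii): one must be careful that ``respects the Weil-pairings'' is the literal condition $e_3'(\phi S,\phi T)=e_3(S,T)$ on generators (equivalently on all pairs, by bilinearity), and track how the exponent transforms — the transformation rule $e_3'(\phi P,\phi Q)=e_3(P,Q)^{\det M}$ after an appropriate normalization of the two reference bases. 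Once that identity is in hand, everything reduces to the elementary group-order count for $\msl{2}{\mzz/3\mzz}$, which presents no obstacle.
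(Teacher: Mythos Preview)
Your proposal is correct and follows essentially the same argument as the paper: identify isomorphisms with $\mgl{2}{\mff_3}$, compute that respecting the Weil-pairing is a determinant condition, and count using $|\msl{2}{\mff_3}|=24$. The only difference is that the paper chooses the reference bases with $e_3(S,T)=e_3'(S',T')=\zeta_3$ from the outset, so that your constants satisfy $c=c'=1$ and the condition becomes simply $\det M=1$; this slightly streamlines step~(ii) but is otherwise the same proof.
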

\begin{proof}Let $S,T\in E\msets{3}$ be such that $E\msets{3}=\mseta{S,T}$ and $e_3\msetp{S, T}=\zeta_3$ with $\zeta_3$ a fixed primitive third root of unity. Choose $S^\prime,T^\prime\in E^\prime\msets{3}$ likewise. Since $E\msets{3}$ and $E^\prime \msets{3}$ are two-dimensional vector spaces over $\mff_3$, there exists a~bijection
\begin{gather*}
\mgl{2}{\mff_3}\longrightarrow\miso{E\msets{3},E^\prime\msets{3}}, \\
A =\begin{pmatrix}a & b \\
c & d
\end{pmatrix}
\longmapsto \phi_A\msetp{\alpha S+\beta T} =\msetp{\alpha a+\beta b}S^\prime+\msetp{\alpha c+\beta d}T^\prime.
\end{gather*}
Notice that
\begin{gather*}
e_3^\prime\msetp{\phi_A\msetp{S},\phi_A\msetp{T}}=e_3^\prime\msetp{aS^\prime+bT^\prime,cS^\prime+dT^\prime} =e_3^\prime\msetp{S^\prime,T^\prime}^{ad-bc} =\zeta_3^{\det{A}}.
\end{gather*}
So $\phi_A$ respects the Weil-pairings if and only if $\det{A}=1$, that is $A\in \msl{2}{\mff_3}$. Now $\msetv{\mgl{2}{\mff_3}}=48$ and $\msets{\mgl{2}{\mff_3}\colon \msl{2}{\mff_3}}=2$. Hence there are 48 isomorphisms $E\msets{3}\rightarrow E^\prime\msets{3}$ of which 24 respect the Weil-pairings.
\end{proof}

Next we prove the proposition.

\begin{proof}[Proof of Proposition~\ref{hpga:prop:wprigepgl}] Let $j_0$ and $j_0^\prime$ be the $j$-invariants of $E$ and $E^\prime$ respectively. Denote the specialization of $\mathcal{E}^W$ at $t_0\in\mps{1}{\mac {k}}$ by $E_{t_0}^W$. If $E_{t_0}^W$ is non-singular, then let $A_{t_0}\colon E_{t_0} \rightarrow E_{t_0}^W$ be the isomorphism induced by the linear change of coordinates $A$ from Proposition~\ref{prop7} at $t_0$.

Assume that $j_0^\prime\neq j_0,0,1728$ and take $a_t$ as defined in Proposition~\ref{prop7}. Consider the polynomial
\begin{gather*}
G=-1728\msetp{4a_t}^3-j_0^\prime \Delta\big(\mathcal{E}^W\big) =\msetp{j_0-j_0^\prime}\Delta\msetp{E}\;t^{12}+2^{13}3^6a^2b t^{11}+\cdots.
\end{gather*}
in $k\msets{t}$, whose roots give $E_{t_0}^W$'s with $j$-invariant equal to $j_0^\prime$. The polynomial $G$ has degree~12 and its discriminant is
\begin{gather*}
-3^{147}{j_0^\prime}^8\msetp{j_0^\prime-1728}^6\Delta\msetp{E}^{44},
\end{gather*}
which is non-zero, so $G$ has distinct roots $t_1,\ldots,t_{12}$ in $\mac{k}$. Since the $j$-invariant of $E_{t_i}^W$ is equal to $j_0^\prime$, there exists an isomorphism $\Psi_i\colon E_{t_i}^W\rightarrow E^\prime$. An isomorphism respects the Weil-pairings, see \cite[Proposition~III.8.2]{silverman1985} or \cite[Theorem~3.9]{washington2008}. From Sections~\ref{hpga:sec:3tgrp} and~\ref{hpga:sec:wp} it follows that $E_{t_i} \msets{3}=E\msets{3}$ as groups with identical Weil-pairings. Therefore for every $i=1,\ldots,12$ and $\sigma\in\maut{E^\prime}\cong\mzz/2\mzz$
\begin{gather*}
\phi_{i,\sigma} =\mres{\msetp{\sigma\circ\Psi_i\circ A_{t_i}}}{E_{t_i}\msets{3}}\colon \ E\msets{3} \rightarrow E^\prime\msets{3}
\end{gather*}
is an isomorphism respecting the Weil-pairings. Notice that $\sigma\circ\Psi_i \circ A_{t_i}$ is an element of $\mpgl{3}{\mac{k}}$, because $E_{t_i}^W$ and $E^\prime$ are in Weierstrass form and $A$ is a linear change of coordinates. All 24 isomorphisms $\phi_{i,\sigma}$ are distinct as the following argument shows. Suppose that $\phi_{i,\sigma}=\phi_{j,\tau}$, then $\sigma\circ\Psi_i\circ A_{t_i}=\tau\circ\Psi_j\circ A_{t_j}$ according to Section~\ref{hpga:sec:3tlcc}. Let $P\in E^\prime\setminus E^\prime\msets{3}$, then $Q=\msetp{\sigma\circ\Psi_i \circ A_{t_i}}^{-1}\msetp{P}$ is a point in $E_{t_i}\cap E_{t_j}$, so Corollary~\ref{hpga:cor:cphpfp} implies that $t_i=t_j$, that is $i=j$. Since $\Psi_i$ and $A_{t_i}$ are isomorphisms, $\sigma=\tau$. Thus $\phi_{i,\sigma}=\phi_{j,\tau}$ if and only if $i=j$ and $\sigma=\tau$. Since the $\phi_{i,\sigma}$'s respect the Weil-pairings, Lemma~\ref{hpga:lem:cwpi} implies that these are all the possible isomorphisms $E\msets{3}\rightarrow E^\prime\msets{3}$ that respect the Weil-pairings. Hence $\phi=\phi_{i,\sigma}$ for some $i=1,\ldots,12$ and $\sigma\in\maut{E^\prime}$, which proves the proposition in this case.

Suppose that $j_0^\prime=j_0$ and $j_0^\prime\neq 0,1728$, then the $G$ above has degree 11 and the discriminant of $G$ is
\begin{gather*}
-2^{130}3^{195}a^{20}b^{10}\Delta\msetp{E}^{30},
\end{gather*}
which is again non-zero, so $G$ has distinct roots $t_1,\ldots,t_{11}$ in $\mac {k}$. In this case the $j$-invariant of~$E_\infty$ is also equal to $j_0^\prime$, so let $t_{12}=\infty$. The argument presented before now finishes the proof in this case.

Assume that $j_0^\prime=0$. This case is the same as before with the exception of the polyno\-mial~$G$, which in this case should be replaced by~$a_t$. The four distinct $t_i$'s and the six elements in $\maut{E^\prime}$ again give 24 isomorphisms $\phi_{i,\sigma}$.

Finally, if $j_0^\prime=1728$, then replace $G$ by $b_t$ (defined in Proposition~\ref{prop7}) and proceed as before.
\end{proof}

\section{Proof of the theorem}\label{hpga:sec:pot}\label{sec7}

In the proof of Theorem~\ref{hpga:thm:mt} we need a result from Galois cohomology, namely:

\begin{Lemma}\label{hpga:lem:pglgalinv}If $k$ is a perfect field, then $\mpgl{3}{\mac{k}}^{G_k}=\mpgl{3}{k}$.
\end{Lemma}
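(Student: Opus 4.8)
The plan is to show both inclusions, the non-trivial one being $\mpgl{3}{\mac{k}}^{G_k}\subseteq\mpgl{3}{k}$. An element of $\mpgl{3}{\mac{k}}$ fixed by $G_k$ is represented by some $M\in\mgl{3}{\mac{k}}$ with the property that for every $\sigma\in G_k$ there is a scalar $\lambda_\sigma\in\mac{k}^\times$ with $M^\sigma=\lambda_\sigma M$, where $M^\sigma$ denotes the entrywise action of $\sigma$. First I would observe that $\sigma\mapsto\lambda_\sigma$ is a continuous $1$-cocycle: applying two automorphisms gives $M^{\sigma\tau}=(\lambda_\sigma M)^\tau=\lambda_\sigma^\tau\lambda_\tau M$, so $\lambda_{\sigma\tau}=\lambda_\sigma^\tau\lambda_\tau$, which is exactly the cocycle condition for the $G_k$-module $\mac{k}^\times$. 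By Hilbert's Theorem~90 we have $\mcoh{1}{G_k,\mac{k}^\times}=0$, so the cocycle is a coboundary: there exists $\mu\in\mac{k}^\times$ with $\lambda_\sigma=\mu^\sigma/\mu$ for all $\sigma$.

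Then I would rescale: set $N=\mu^{-1}M\in\mgl{3}{\mac{k}}$. For every $\sigma\in G_k$ one computes $N^\sigma=(\mu^\sigma)^{-1}M^\sigma=(\mu^\sigma)^{-1}\lambda_\sigma M=(\mu^\sigma)^{-1}(\mu^\sigma/\mu)M=\mu^{-1}M=N$, so $N$ is fixed entrywise by $G_k$. Since $k$ is perfect (indeed here we only need $\mac{k}^{G_k}=k$, which holds for any field when $\mac{k}$ is replaced by the separable closure, and for perfect fields the separable and algebraic closures coincide), every entry of $N$ lies in $k$, so $N\in\mgl{3}{k}$. As $N$ and $M$ differ by the scalar $\mu$, they represent the same class in $\mpgl{3}{\mac{k}}$, and that class lies in the image of $\mpgl{3}{k}$. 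The reverse inclusion $\mpgl{3}{k}\subseteq\mpgl{3}{\mac{k}}^{G_k}$ is immediate since a matrix with entries in $k$ is fixed entrywise, hence its image in $\mpgl{3}{\mac{k}}$ is $G_k$-fixed.

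The only genuine input is Hilbert's Theorem~90 applied to $\mac{k}^\times$; the rest is bookkeeping with the cocycle identity and the rescaling trick. The main obstacle, such as it is, is simply making the cocycle computation and the descent argument precise, in particular checking continuity of $\sigma\mapsto\lambda_\sigma$ (which follows because $M$ has only finitely many entries, each lying in a finite extension of $k$, so the scalars are determined by the action on a finite extension) so that the cohomological vanishing genuinely applies. I would phrase the argument so that it works verbatim for any perfect field, matching the hypothesis in the statement, and I expect the write-up to be only a few lines.
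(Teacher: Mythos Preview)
Your proof is correct and is essentially the same as the paper's: the paper invokes the long exact sequence in Galois cohomology attached to $1\to\mac{k}^\ast\to\mgl{3}{\mac{k}}\to\mpgl{3}{\mac{k}}\to 1$ together with Hilbert~90, and your explicit cocycle-and-rescaling computation is precisely the unwinding of the connecting map in that sequence. The only difference is packaging---the paper states it diagrammatically while you work it out by hand---so there is nothing to add.
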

\begin{proof}Consider the short exact sequence of $G_k$-groups
\begin{gather*}
1 \longrightarrow \mac{k}^\ast \longrightarrow \mgl{3}{\mac{k}} \longrightarrow \mpgl{3}{\mac{k}} \longrightarrow 1,
\end{gather*}
which induces the exact sequence in the first row of the diagram
\begin{gather*}
\xymatrix{1\ar[r] & {\mac{k}^\ast}^{G_k}\ar[r] & \mgl{3}{\mac{k}}^{G_k}\ar[r] & \mpgl{3}{\mac{k}}^{G_k}\ar[r] & {\rm H}^1 \big(G_k,\mac{k}^\ast\big) \\
1\ar[r] & k^\ast\ar[r]\ar[u] & \mgl{3}{k}\ar[r]\ar[u] & \mpgl{3}{k}\ar[r]\ar[u] & 1.}
\end{gather*}
The second row is the definition of $\mpgl{3}{k}$ and the vertical maps are the inclusion maps. Hilbert's Theorem~90 gives that ${\rm H}^1 \big(G_k,\mac{k}^\ast\big)= \msetb{1}$. Since ${\mac{k}^\ast}^{G_k}=k^\ast$ and $\mgl{3}{\mac{k}}^{G_k}= \mgl{3}{k}$, also $\mpgl{3}{\mac{k}}^{G_k}=\mpgl{3}{k}$.
\end{proof}

\begin{proof}[Proof of Theorem~\ref{hpga:thm:mt}] Assume that $\Phi\colon E_{t_0}\rightarrow E^\prime$ for some $t_0\in\mps{1}{k}$ is an isomorphism defined over $k$. This map respects the Weil-pairings according to
\cite[Proposition III.8.2]{silverman1985}. So $\mres{\Phi}{E_{t_0}\msets{3}}\colon E_{t_0}\msets{3}\rightarrow E^\prime\msets{3}$ is a symplectic isomorphism. In Sections~\ref{hpga:sec:3tgrp} and~\ref{hpga:sec:wp} it was shown that $E\msets {3}=E_{t_0}\msets{3}$ as groups and have identical Weil-pairings. Thus $\mres {\Phi}{E_{t_0}\msets{3}}$ can be considered as a~symplectic isomorphism $E\msets {3}\rightarrow E^\prime\msets{3}$. Hence $\mres{\Phi}{E_{t_0}\msets{3}}$ is the desired map.

Suppose that there exists a symplectic isomorphism $\phi\colon E\msets{3}\rightarrow E^\prime\msets{3}$, then Proposition~\ref{hpga:prop:wprigepgl} implies that there exists a $\Phi\in\mpgl{3}{\mac{k}}$ and a $t_0\in\mps{1}{\mac{k}}$ such that $\Phi\colon E_{t_0}\rightarrow E^\prime$ and $\phi=\mres{\Phi}{E\msets{3}}$. Since $\sigma\circ\phi=\phi\circ\sigma$ for all $\sigma\in G_k$,
\begin{gather*}
\sigma\msetp{\Phi}\msetp{\sigma\msetp{S}} =\sigma\circ\Phi\msetp{S} =\sigma\circ\phi\msetp{S} =\phi\circ\sigma\msetp{S} =\Phi\msetp{\sigma\msetp{S}}
\end{gather*}
for all $S\in E\msets{3}$, so Propositions~\ref{hpga:prop:pglme} and~\ref{hpga:prop:3t2pl} imply that $\sigma\msetp{\Phi}=\Phi$. Therefore Lemma~\ref{hpga:lem:pglgalinv} implies that $\Phi\in\mpgl{3}{k}$. Hence $t_0\in\mps{1}{k}$ and $E^\prime\cong_kE_{t_0}$.
\end{proof}

\section{Characteristic two}\label{hpha:sec:char2}

So far we assumed $k$ to be a perfect field of characteristic different from two and three. There is a natural idea how to adapt the proof of Theorem~\ref{hpga:thm:mt} to characteristic two: replace the explicitly given Hesse pencil by what it actually describes, namely the pencil of cubics with the nine points of order $3$ on the initial elliptic curve as base points. This was done by one of us in her bachelor's project~\cite{tuijp2015}, and we briefly describe the results here.
\subsection{Elliptic curves in characteristic two}
Any elliptic curve $E$ over a field of characteristic $2$ can be given as follows, see \cite[p.~409]{silverman1985}:
\begin{gather*}j(E) \neq 0\colon \ y^2 +xy=x^3+a_2x^2+a_6, \qquad \Delta=a_6, \qquad j(E)=1/a_6,\\
j(E) = 0\colon \ y^2 +a_3y=x^3+a_4x+a_6, \qquad \Delta=a_3^4, \qquad j(E)=0.\end{gather*}

If $k$ is a field of characteristic 2, then the Hessian of any homogeneous polynomial $F\in k[X,Y,Z]$ of degree 3 equals zero, as is easily verified. We will show that given an elliptic curve~$E$ over~$k$, say by a special equation as above, the curves in the pencil with base points $E[3]$ all have $E[3]$ as flex points, compare Corollary~\ref{corthree} for the classical situation. In~\cite{Glynn}, Glynn defines a Hessian for any curve $C=\mathcal{Z}(F)$ with $F \in k[X,Y,Z]$ homogeneous of degree~3 (characteristic two). In fact our construction coincides with his, although we put more emphasis on how it is obtained from considering $3$-division polynomials. Note that the subject of flex points on cubic curves in characteristic two is in fact very classical: compare with, e.g., Dickson's paper~\cite{Dickson} published in~1915.

\subsection[The case $j(E) \neq 0$]{The case $\boldsymbol{j(E) \neq 0}$}\label{8.2}
We may and will assume that $E$ is given by
\begin{gather*}y^2 +xy=x^3+a_2x^2+a_6, \qquad \Delta=a_6, \qquad j=1/a_6.\end{gather*}
Define Hess($E$) as the plane curve defined by
\begin{gather*}y^2+xy^2+x^2y+xy+a_2x^3+a_2x^2+a_6x=0.\end{gather*}
The Hesse pencil $\mathcal{E}$ in this case is given by
\begin{gather*}t\big(y^2 +xy + x^3+a_2x^2+a_6\big) + y^2+xy^2+x^2y+xy+a_2x^3+a_2x^2+a_6x=0.\end{gather*}
In the next paragraphs, we will show that the Hessian and Hesse pencil have the desired pro\-per\-ties. Firstly, the analog of Proposition~\ref{two} holds:

\begin{Proposition}\label{flexE} If $P$ is a point on an elliptic curve $E$ with equation $y^2+xy=x^3+a_2x^2+a_6$, then $P$ is a flex point of $E$ if and only if $P \in E \cap \operatorname{Hess}(E)$.
\end{Proposition}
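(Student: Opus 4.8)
The plan is to mimic the characteristic-zero argument behind Proposition~\ref{two}, but since the usual Hessian vanishes identically in characteristic two, I would instead verify the claim by a direct computation using the explicitly given curve $\operatorname{Hess}(E)$. First I would put $E$ in the affine chart $z=1$ and recall that a point $P=(x_0,y_0)$ on $E$ is a flex point precisely when the tangent line $L$ to $E$ at $P$ meets $E$ with multiplicity at least three at $P$; equivalently (since $E$ is a cubic and $L$ meets $E$ in exactly three points counted with multiplicity) the intersection $E\cap L$ is supported only at $P$. I would compute the tangent line using the partial derivatives of $G(x,y)=y^2+xy+x^3+a_2x^2+a_6$: in characteristic two, $\partial G/\partial x = y + x^2$ and $\partial G/\partial y = x$, so the tangent direction at $P$ is governed by these. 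I would then substitute the parametrization of $L$ into $G$ and read off when the resulting cubic in the line parameter has a triple root at the value corresponding to $P$; the vanishing of the relevant coefficient is a polynomial condition on $(x_0,y_0)$.

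The second half is to identify that polynomial condition with membership in $E\cap\operatorname{Hess}(E)$. Here I would simply substitute $z=1$ into the defining equation of $\operatorname{Hess}(E)$, getting $y^2+xy^2+x^2y+xy+a_2x^3+a_2x^2+a_6x=0$, and check algebraically — working modulo the ideal generated by $G(x,y)$, so as to use that $P$ lies on $E$ — that this is equivalent to the triple-root condition found above. Since everything is a finite symbolic manipulation in $\mathbb{F}_2[a_2,a_6][x,y]$, this is in principle routine and (as the paper itself notes for Corollary~\ref{corthree}) can be confirmed by a computer algebra calculation; I would organize it so that the forward and backward implications are just the two directions of a single polynomial identity. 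I would also separately treat the point $O$ at infinity, checking directly that $O$ is a flex of $E$ and that $O$ lies on $\operatorname{Hess}(E)$, so that the biconditional holds at every point of $\mathbb{P}^2$, not merely in the affine chart.

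The main obstacle I expect is bookkeeping rather than conceptual: in characteristic two one must be careful that the naive ``Hessian determinant'' argument is unavailable, so the entire content is in checking that the ad hoc polynomial defining $\operatorname{Hess}(E)$ really does cut out the flex locus. The delicate points are (i) handling the singular case $a_6=0$, where $E$ is not smooth — one should restrict to $\Delta=a_6\neq 0$ or else note that the statement, like Proposition~\ref{two}, is allowed to include singular points as flexes — and (ii) making sure no flex point is lost or spuriously gained at infinity. Once the affine identity modulo $(G)$ is established and the point at infinity is dealt with by hand, the proposition follows.
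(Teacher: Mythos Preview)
Your proposal is correct and would work, but it takes a different route from the paper. You argue directly from the geometric definition of a flex: parametrize the tangent line at $P=(x_0,y_0)$ using $\partial G/\partial x=y+x^2$ and $\partial G/\partial y=x$, substitute into $G$, and extract the condition for a triple root, then match that condition with the defining equation of $\operatorname{Hess}(E)$ modulo $(G)$. The paper instead passes through the group law: since $O$ is a flex, the other flexes are exactly the points with $3P=O$, and the authors translate $-P=2P$ via the duplication formula into the two equations $(E1)$, $(E2)$, then rewrite $(E1)$ using the Weierstrass relation to recover the $\operatorname{Hess}(E)$ equation. Your approach has the advantage of being self-contained---it does not appeal to the Section~\ref{hpga:sec:3tgrp} identification of flexes with $3$-torsion and needs no doubling formula---whereas the paper's approach ties the computation directly to the $3$-division polynomial, which is conceptually closer to the paper's eventual goal and explains where the ad~hoc equation for $\operatorname{Hess}(E)$ comes from in the first place. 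Both reduce to a short symbolic check; your separate treatment of $O$ and your remark about $a_6=0$ mirror exactly what the paper does (the paper also handles $O$ first and notes $x\neq 0$ for affine flexes).
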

\begin{proof}The point $O$ is a flex point on both $E$ and $\operatorname{Hess}(E)$ hence the result holds for $O$. Next take any other flex point of~$E$, i.e., a point $P\neq O$ with $3P=O$. Put $P=(x,y)$. Note that $x\neq 0$, because any point $(0,y)\in E$ has order two. A small calculations (compare $x$-coordinates of $-P$ and $2P$) shows~$P$ is a flex point precisely when
\begin{gather*}
 (E1)\quad 0=x^2 + \left(\frac{y}{x}\right)^2 + \frac{y}{x} + a_2, \\
 (E2)\quad y+x = \left(x + \frac{y}{x} + 1\right)\left(x^2 + \left(\frac{y}{x}\right)^2 + x + \frac{y}{x} + a_2\right) + x^2.
\end{gather*}
Using the equation defining $E$, one rewrites $(E1)$ as
\begin{gather} \label{eq2}
 0 = y^2+xy^2+x^2y+xy+a_2x^3+a_2x^2+a_6x.
\end{gather}
The proposition now follows from a straightforward calculation.
\end{proof}

Note that in fact $P=(x,y)$ is a flex point on $E$ if and only if $P\in E$ satisfies equation~(\ref{eq2}).

Now we show the analog of Corollary~\ref{corthree}.

\begin{Proposition}If $P$ is a flex point on an elliptic curve $E$ given by $y^2+xy=x^3+a_2x^2+a_6$, then $P$ is also a flex point on the Hesse pencil $\mathcal{E}$.
\end{Proposition}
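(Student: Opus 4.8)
The plan is to imitate the proof of Corollary~\ref{corthree}, with Proposition~\ref{flexE} playing the role that Proposition~\ref{two} played in the classical case. Write $F$ for the degree-three homogenization of $y^2+xy+x^3+a_2x^2+a_6$ and $H$ for the degree-three homogenization of $y^2+xy^2+x^2y+xy+a_2x^3+a_2x^2+a_6x$, so that $E=\mathcal{Z}(F)$, $\operatorname{Hess}(E)=\mathcal{Z}(H)$, and the Hesse pencil is $\mathcal{E}=\mathcal{Z}(tF+H)$ over $k(t)$, with generic member a genuine cubic (the pencil is non-trivial since $F$ and $H$ are not proportional) which is moreover smooth, as $C_\infty=E$ is.

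The first step, which is immediate, is that $P$ lies on every member of $\mathcal{E}$. If $P=O$, then $O$ lies on $E$ and one checks by direct substitution that $O$ lies on $\operatorname{Hess}(E)$, hence on $\mathcal{Z}(tF+H)$. If $P\neq O$, then by the remark following Proposition~\ref{flexE} the point $P$ satisfies equation~(\ref{eq2}), whose left-hand side is precisely the affine equation of $\operatorname{Hess}(E)$; thus $F(P)=0$ and $H(P)=0$, so $(tF+H)(P)=0$, and $P\in\mathcal{E}$. The substantive step is to upgrade this to ``$P$ is a flex point of $\mathcal{E}$'', regarded as a plane cubic over $k(t)$. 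For this I would, following Glynn, attach to an arbitrary homogeneous cubic $G\in k[X,Y,Z]$ a modified Hessian $\operatorname{Hess}(G)$ — obtained from the $3$-division polynomial and reducing to the $\operatorname{Hess}(E)$ already written down when $G=F$ — and prove the analogue of Proposition~\ref{flexE} for a \emph{general} cubic: a point lying on $\mathcal{Z}(G)$ is a flex of $\mathcal{Z}(G)$ if and only if it also lies on $\mathcal{Z}(\operatorname{Hess}(G))$. Granting this, it remains to verify the identity
\begin{gather*}
\operatorname{Hess}(tF+H)=\alpha F+\beta H,\qquad \alpha,\beta\in k[t],
\end{gather*}
which is the characteristic-two counterpart of the Magma computation invoked in Corollary~\ref{corthree}. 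The argument then closes exactly as there: since $F(P)=H(P)=0$ by the first step, we get $(tF+H)(P)=0$ and $\operatorname{Hess}(tF+H)(P)=\alpha(t)F(P)+\beta(t)H(P)=0$, so $P$ lies on $\mathcal{Z}(tF+H)\cap\mathcal{Z}(\operatorname{Hess}(tF+H))$, and the general-cubic form of Proposition~\ref{flexE} shows $P$ is a flex of $\mathcal{E}$.

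I expect the whole difficulty to sit in this last step: on the one hand, setting up the modified Hessian of an arbitrary cubic so that the flex criterion survives outside the normal form of Proposition~\ref{flexE}, and on the other hand, the identity $\operatorname{Hess}(tF+H)=\alpha F+\beta H$, where one should also watch the two degenerate members $C_\infty=E$ and $C_0=\operatorname{Hess}(E)$ — the characteristic-two avatars of the ``Hessian of the Hessian'' phenomenon — just as the special points $t_0$ were dealt with separately in Corollary~\ref{corthree}. A more hands-on alternative, bypassing the general Hessian, is to observe that at the base point $P$ the tangent line to $C_{t}$ is cut out by $t\ell_F+\ell_H$, where $\ell_F$ and $\ell_H$ are the linear forms defining the tangent lines at $P$ to $E$ and to $\operatorname{Hess}(E)$; one then checks, by a coordinate computation of the type used in the proof of Proposition~\ref{flexE} (and after verifying that $P$ is a flex of $\operatorname{Hess}(E)$ and that the residual lines cut out by the quadratic parts of $F$ and $H$ at $P$ agree), that this line meets $C_{t}$ to order at least three at $P$. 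This route is elementary but amounts to a longer and less conceptual bookkeeping exercise than the Hessian-based one.
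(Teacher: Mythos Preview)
Your proposal is sound, and in fact you have already identified the paper's own argument: it takes precisely your ``hands-on alternative''. The paper's proof simply notes that $P$ lies on $\mathcal{E}$ by construction, and then asserts that the tangent line to $\mathcal{E}$ at $P$ meets $\mathcal{E}$ with multiplicity~$3$, calling this a straightforward calculation and deferring the details to~\cite{tuijp2015}; the point $O$ is handled the same way.

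Your primary (Hessian-based) route is genuinely different and more conceptual: by invoking Glynn's modified Hessian for an \emph{arbitrary} cubic in characteristic two and verifying the closure identity $\operatorname{Hess}(tF+H)=\alpha F+\beta H$, you would obtain a uniform argument exactly parallel to Corollary~\ref{corthree}. This buys a cleaner structural picture and would treat both Weierstrass normal forms at once, at the cost of setting up machinery the paper deliberately avoids (the paper only defines $\operatorname{Hess}(E)$ for the two special Weierstrass shapes, never for general cubics). You correctly locate the work in establishing the general flex criterion and the closure identity; both are plausible but non-trivial to pin down.

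One caution on your structured version of the hands-on route: the intermediate claims that $P$ is a flex of $\operatorname{Hess}(E)$ and that the ``residual lines'' of $F$ and $H$ at $P$ agree are stronger than what is actually needed, and $\operatorname{Hess}(E)$ need not be smooth, so phrasing the computation that way may create spurious case distinctions. The paper's bare intersection-multiplicity calculation over $k(t)$ avoids this entirely.
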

\begin{proof}It follows directly from the construction of $\mathcal{E}$ that $P$ is indeed a point on it. To prove that $P$ is also a flex point on $\mathcal{E}$, one shows that the tangent line to $\mathcal{E}$ at $P$ intersects $\mathcal{E}$ at $P$ with multiplicity 3. This is a straightforward calculation for which we refer to~\cite{tuijp2015}. Clearly the point $O$ is also a point on the Hesse pencil and it is also a flex point, as can be shown in the same way.
\end{proof}
\subsection[The case $j(E) =0$]{The case $\boldsymbol{j(E) =0}$}\label{8.3}
In the remaining case $j(E)=0$ we may and will assume that $E$ is given as
\begin{gather*}y^2 +a_3y=x^3+a_4x+a_6, \qquad \Delta=a_3^4, \qquad j=0.\end{gather*}
Now define Hess($E$) by
\begin{gather*}xy^2 + a_3 xy+ a_4 x^2 + \big(a_3^2 + a_6\big)x + a_4^2=0,\end{gather*}
so the Hesse pencil $\mathcal{E}$ becomes
\begin{gather*}t\big(y^2 +a_3y + x^3+a_4x+a_6\big) + xy^2 + a_3 xy+ a_4 x^2 + \big(a_3^2 + a_6\big)x + a_4^2 =0.\end{gather*}
In this case as well, the analogs of Proposition~\ref{two} and Corollary~\ref{corthree} hold:

\begin{Proposition}If $P$ is a point on the elliptic curve $E$ given by $y^2+a_3y=x^3+a_4x+a_6$, then $P$ is a flex point if and only if $P \in E \cap \operatorname{Hess}(E)$.
\end{Proposition}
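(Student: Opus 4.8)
The plan is to follow exactly the strategy used in the proof of Proposition~\ref{flexE} (the analogous statement for $j(E)\neq 0$): reduce both the condition ``$P$ is a flex point'' and the condition ``$P\in\operatorname{Hess}(E)$'' to one and the same quartic equation in the $x$-coordinate of $P$, using the defining equation of $E$ to pass between them.

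First I would dispose of the point $O$. The tangent line to $E$ at $O$ is the line at infinity, which meets $E$ only at $O$ and with multiplicity three, so $O$ is a flex point; and substituting $(X:Y:Z)=(0:1:0)$ into the homogenization $XY^2+a_3XYZ+a_4X^2Z+(a_3^2+a_6)XZ^2+a_4^2Z^3$ of the equation defining $\operatorname{Hess}(E)$ shows $O\in\operatorname{Hess}(E)$. Hence the equivalence holds at $O$, and it remains to treat an affine point $P=(x,y)$ on $E$.

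Next I would record that $\Delta=a_3^4\neq 0$ forces $a_3\neq 0$, so $2y+a_3\neq 0$ for every affine point of $E$: thus $E$ has no affine $2$-torsion and no affine point of $E$ has a vertical tangent line. Consequently, for $P\neq O$, being a flex point is equivalent to $3P=O$, i.e., to $2P=-P$, i.e., to the $x$-coordinate of $2P$ being equal to $x$. Applying the duplication formula for $y^2+a_3y=x^3+a_4x+a_6$ in characteristic two (where $a_1=a_2=0$), the tangent slope at $P$ is $\lambda=(x^2+a_4)/a_3$, so the $x$-coordinate of $2P$ is $\lambda^2=(x^4+a_4^2)/a_3^2$. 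Setting this equal to $x$ and clearing denominators, I would conclude that $P$ is a flex point if and only if
\[
x^4+a_3^2x+a_4^2=0.
\]
For the reverse side, I would substitute the curve relation in the form $xy^2+a_3xy=x(y^2+a_3y)=x(x^3+a_4x+a_6)=x^4+a_4x^2+a_6x$ into the equation $xy^2+a_3xy+a_4x^2+(a_3^2+a_6)x+a_4^2=0$ defining $\operatorname{Hess}(E)$; in characteristic two the $a_4x^2$ terms and the $a_6x$ terms cancel in pairs, leaving precisely $x^4+a_3^2x+a_4^2=0$. Comparing with the previous display finishes the proof.

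I do not expect a genuine obstacle here: the argument is a short, direct computation. The only points that need a little care are the reduction of the flex condition to $3P=O$ (this is exactly where $a_3\neq 0$ is used, to rule out vertical tangent lines and affine $2$-torsion) and the correct specialization of the duplication formula to characteristic two; once these are in place, the two quartics match on the nose.
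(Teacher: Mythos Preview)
Your proof is correct and follows essentially the same approach as the paper: both reduce the flex condition for an affine point to the quartic $x^4+a_3^2x+a_4^2=0$ (the paper via ``a calculation'', you via the explicit duplication slope $\lambda=(x^2+a_4)/a_3$), and then use the curve relation $x(y^2+a_3y)=x^4+a_4x^2+a_6x$ to identify this quartic with the affine equation of $\operatorname{Hess}(E)$. Your treatment is in fact a bit more detailed than the paper's, particularly in justifying the duplication formula via $a_3\neq 0$.
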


\begin{proof}For $O$ the exact same argument holds as when $j(E)\neq 0$. A calculation shows that $P=(x,y)\in E$ is a flex point precisely when
\begin{gather*}%\label{9punten}
a_3^2x = x^4 + a_4^2.\end{gather*}
Using the equation of the elliptic curve this is rewritten as
\begin{gather*}
0= a_3^2x + x\big(y^2 + a_3y + a_4x + a_6\big) + a_4^2 = xy^2 + \big(a_3^2+a_6\big)x + a_4x^2 + a_3xy + a_4^2.\tag*{\qed}
\end{gather*}\renewcommand{\qed}{}
\end{proof}

\begin{Proposition} If $P$ is a flex point on the elliptic curve $E$ given by $y^2+a_3y=x^3+a_4x+a_6$, then it is also a flex point on the Hesse pencil $\mathcal{E}$.
\end{Proposition}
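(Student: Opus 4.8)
The plan is to imitate the proof given above in the case $j(E)\neq 0$. First I would check that $P$ lies on $\mathcal{E}$. By the preceding Proposition a flex point $P$ of $E$ satisfies both $y^2+a_3y+x^3+a_4x+a_6=0$ and $xy^2+a_3xy+a_4x^2+\big(a_3^2+a_6\big)x+a_4^2=0$ at $P$, i.e., $P\in E\cap\operatorname{Hess}(E)$. Since the polynomial defining $\mathcal{E}$ is the $k[t]$-linear combination $t\cdot\big(y^2+a_3y+x^3+a_4x+a_6\big)+\big(xy^2+a_3xy+a_4x^2+(a_3^2+a_6)x+a_4^2\big)$ of these two, it also vanishes at $P$, so $P\in\mathcal{E}$; the point $O=(0:1:0)$ at infinity is treated separately, noting that $O$ lies on $E$ and on $\operatorname{Hess}(E)$ after homogenizing to degree three, hence on every fiber of $\mathcal{E}$.

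Next I would show that $P$ is in fact a flex point of $\mathcal{E}$, regarded as a plane cubic over $k(t)$. For $P\neq O$ this amounts to verifying that $P$ is a smooth point of $\mathcal{E}$ and that the tangent line $L_P$ to $\mathcal{E}$ at $P$ meets $\mathcal{E}$ at $P$ with intersection multiplicity at least three: one computes the partial derivatives of the defining polynomial of $\mathcal{E}$ at $P$, writes down $L_P$, parametrizes it, substitutes into the cubic, and checks that the resulting one-variable polynomial over $k(t)$ has a zero of order $\geq 3$ at the parameter value corresponding to $P$. For $O$ one argues in the same way with homogeneous coordinates, the role of $L_P$ being played by the line $Z=0$ at infinity. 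Both are direct though somewhat lengthy computations; they are carried out in \cite{tuijp2015}, and we only quote the outcome. (Alternatively one could search for an identity $\operatorname{Hess}(tF+\operatorname{Hess}(F))=\alpha F+\beta\operatorname{Hess}(F)$ with $\alpha,\beta\in k[t]$ for the modified Hessian, as in the proof of Corollary~\ref{corthree}, and then invoke the preceding Proposition; but the explicit tangent-line computation is the route actually taken.)

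The main obstacle is the bookkeeping in this tangent-line computation over the function field $k(t)$: one must confirm that $P$ stays a smooth point of the generic fiber $\mathcal{E}$, so that ``the tangent line at $P$'' is well defined, and that no cancellation peculiar to characteristic two lowers the intersection multiplicity below three. It is precisely because the ordinary Hessian vanishes identically in characteristic two that one cannot simply cite Proposition~\ref{two} and Corollary~\ref{corthree} here, and must instead perform the explicit intersection-multiplicity check.
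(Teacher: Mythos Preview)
Your proposal is correct and follows essentially the same approach as the paper: the paper's proof simply says ``the reasoning is the same as for the case $j(E)\neq 0$'', and that earlier proof is exactly what you outline~--- $P\in\mathcal{E}$ by construction of the pencil, and the flex property is established by showing the tangent line to $\mathcal{E}$ at $P$ meets $\mathcal{E}$ with multiplicity~3, with the detailed computation deferred to~\cite{tuijp2015} and the point $O$ treated separately in the same manner. Your added remarks on the alternative Hessian-identity route and on the smoothness/characteristic-two caveats are accurate commentary but go beyond what the paper records.
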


\begin{proof}The reasoning is the same as for the case $j(E) \neq 0$. The straightforward calculation is presented in detail in~\cite{tuijp2015}.
\end{proof}

Using the properties shown above of our Hesse pencil in characteristic two, we can now almost completely follow the reasoning of the earlier sections since most arguments do not involve the characteristic of~$k$. Only for the analog of Proposition~\ref{prop10} the proof needs to be adjusted in characteristic two, because here actual calculations are done with the Hesse pencil. We state it in the present situation.

\begin{Proposition}\label{4.10}Let $E$ and $E'$ be elliptic curves given by a Weierstrass equation defined over~$k$. If $\phi \colon E[3] \rightarrow E'[3]$ is an isomorphism which respects the Weil-pairings, then there exists a linear change of coordinates $\Phi\colon E_{t_0} \rightarrow E'$ for some $t_0 \in \mathbb{P}^1(\bar{k})$ such that $\Phi|_{E[3]} = \phi$.
\end{Proposition}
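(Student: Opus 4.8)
The plan is to mimic, as closely as possible, the counting argument used in the proof of Proposition~\ref{hpga:prop:wprigepgl}, but now without recourse to the explicit Weierstrass form $\mathcal{E}^W$ produced in Proposition~\ref{prop7}, which is not available in characteristic two. Let $j_0$ and $j_0'$ denote the $j$-invariants of $E$ and $E'$. The key input that survives is everything about $3$-torsion: by the characteristic-two versions of Proposition~\ref{two} and Corollary~\ref{corthree} established in Sections~\ref{8.2} and~\ref{8.3}, the nine flex points of $E$ are exactly the flex points of every fiber $\mathcal{E}$ of the Hesse pencil, so $E[3] = E_{t_0}[3]$ as sets for every $t_0 \in \mathbb{P}^1(\bar k)$, and (following Sections~\ref{hpga:sec:3tgrp} and~\ref{hpga:sec:wp}, whose arguments are characteristic-free once the flex-point facts are in hand) this identification is compatible with the group structure and with the Weil-pairings. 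Likewise Lemma~\ref{hpga:lem:cwpi} and the linear-change-of-coordinates results of Sections~\ref{hpga:sec:3tlcc} and~\ref{hpga:sec:wp3tiie} hold verbatim. So we again know there are exactly $24$ isomorphisms $E[3] \to E'[3]$ respecting the Weil-pairings, and it suffices to produce $24$ distinct such isomorphisms that are restrictions of linear changes of coordinates $E_{t_i} \to E'$.

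Next I would count the fibers $E_{t_0}$ with $j(E_{t_0}) = j_0'$. The map $t_0 \mapsto j(E_{t_0})$ is a rational function of $t$; I would compute it directly from the explicit Hesse pencils written down in Sections~\ref{8.2} and~\ref{8.3} (using the formulas $j = 1/a_6$ resp.\ $j = 0$ for the two normal forms, after bringing a smooth fiber into such a form) and check that, for $j_0' \neq j_0$, the equation $j(E_{t_0}) = j_0'$ has the appropriate number of simple roots $t_1, \dots, t_m$ in $\bar k$ (including possibly $t = \infty$), exactly as in the odd-characteristic case; when $j_0' = j_0$ one uses instead that $E_\infty = E$ contributes, and the cases $j_0' = 0$ (where $|\operatorname{Aut}(E')| = 6$ in characteristic two... in fact for the supersingular $j=0$ curve the automorphism group is larger, of order $24$, which I must track carefully) need the multiplicities of roots and the order of $\operatorname{Aut}(E')$ to multiply out to $24$. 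For each such $t_i$ pick an isomorphism $\Psi_i \colon E_{t_i} \to E'$ over $\bar k$ (possible since the $j$-invariants agree and smoothness holds away from finitely many bad $t$), and for each $\sigma \in \operatorname{Aut}(E')$ set $\phi_{i,\sigma} = (\sigma \circ \Psi_i)|_{E[3]}$. Each $\phi_{i,\sigma}$ respects the Weil-pairings since isomorphisms do and the pairing is the intrinsic one on $E[3]$.

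The remaining point is to show the $\phi_{i,\sigma}$ are pairwise distinct, which as before reduces to Corollary~\ref{hpga:cor:cphpfp}: if $\phi_{i,\sigma} = \phi_{j,\tau}$ then $\sigma \circ \Psi_i$ and $\tau \circ \Psi_j$ agree on the four points $O, S, T, S+T$ (no three collinear, by Proposition~\ref{hpga:prop:3t2pl}), hence are equal as elements of $\operatorname{PGL}_3(\bar k)$ by Propositions~\ref{hpga:prop:pglme} and~\ref{hpga:prop:3t2pl}; taking a point $P \in E' \setminus E'[3]$, its preimage lies in $E_{t_i} \cap E_{t_j}$, so $t_i = t_j$ by Corollary~\ref{hpga:cor:cphpfp}, whence $i = j$ and then $\sigma = \tau$. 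Counting gives exactly $24$ distinct Weil-pairing-respecting isomorphisms arising this way, so by Lemma~\ref{hpga:lem:cwpi} every such isomorphism — in particular the given $\phi$ — equals some $\phi_{i,\sigma}$, and $\Phi = \sigma \circ \Psi_i$ is the desired linear change of coordinates.

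The main obstacle I anticipate is bookkeeping rather than conceptual: verifying, for the two normal forms and for each of the cases $j_0' \neq j_0$, $j_0' = j_0 \neq 0$, and $j_0' = 0$, that the number of fibers with the right $j$-invariant (counted with appropriate multiplicity, and with the fiber at $\infty$ handled separately) times $|\operatorname{Aut}(E')|$ equals $24$ — and in particular getting right the automorphism group of the characteristic-two $j = 0$ curve and the ramification of the map $t \mapsto j(E_{t_0})$ over $j = 0$. This is the one place where the argument genuinely involves calculations with the explicit Hesse pencil, and where characteristic two differs from characteristic $\neq 2,3$; the paper's own remark that this is the only proposition needing adjustment, plus the reference to \cite{tuijp2015}, suggests these computations are carried out there in full.
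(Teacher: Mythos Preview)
Your proposal is correct and follows essentially the same route as the paper: split into the two normal forms for $E$ (Sections~\ref{8.2} and~\ref{8.3}), bring the Hesse pencil into Weierstrass form to compute $j(E_{t_0})$ as an explicit rational function of $t$, and then run the same case analysis ($j_0'\neq j_0$, $j_0'=j_0\neq 0$, $j_0'=0$) so that (number of good $t_0$)$\times|\operatorname{Aut}(E')|=24$, with the key characteristic-two fact you already flagged that $|\operatorname{Aut}(E')|=24$ when $j_0'=0$. The paper simply carries out in full the explicit computations (the Weierstrass coefficients $b_i$, the formula for $j(E_t^W)$, and the discriminant of the polynomial $G$) that you label as bookkeeping.
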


The remainder of this section consists of proving Proposition~\ref{4.10}.

\subsection[The case $j(E) \neq 0$]{The case $\boldsymbol{j(E) \neq 0}$}

We first determine the Weierstrass form of the Hesse pencil in the present case.

Rewrite its equation (as introduced in Section~\ref{8.2}) as
\begin{gather*} (t+1)y^2z+(t+1)xyz+xy^2+x^2y +(t+a_2)x^3+a_2(t+1)x^2z+a_6xz^2+ta_6z^3 =0.\end{gather*}
By a suitable change of coordinates this can be brought in Weierstrass form $\eta^2\zeta + b_1\xi \eta \zeta = \xi^3 + b_2 \xi^2 \zeta + b_6 \zeta^3$. This is quite analogous to what was sketched in the proof of Proposition~\ref{prop7}; see~\cite{tuijp2015} for a detailed description and~\cite{ART} for a more general setup. The stated equation is found with
\begin{gather*}
b_1 = (t+1)^2,\qquad
b_2 = a_2(t+1)^4+a_6t(t+1),\qquad
b_6 = {a_6}(t^4+t^3+t^2+t+a_6)^3.
\end{gather*}

Denote this family of curves by $\mathcal{E}^W$ and let an individual curve in the pencil be denoted by~$E_t^W$, then
\begin{gather*}
j\big(E^W_t\big)=b_1^6/b_6 = \frac{(t+1)^{12}}{a_6\big(t^4+t^3+t^2+t+a_6\big)^3}.
\end{gather*}
If $t=1$, the transformations needed to obtain the above Weierstrass form do not work. In this case one transforms the fiber of given pencil, so the curve $E_1$ with equation
\begin{gather*}xy^2+x^2y+(1+a_2)x^3+a_6xz^2+a_6z^3=0,\end{gather*}
into the other Weierstrass form in characteristic 2:
\begin{gather*}\eta^2\zeta + b_3 \eta \zeta^2 = \xi^3 + b_4 \xi \zeta^2 + b_6 \zeta^3.\end{gather*}
Explicitly, this results in the equation
\begin{gather*}
 \eta^2 \zeta + a_6 \eta \zeta^2 + \xi^3 + a_6^2 \xi \zeta^2 + a_6^2(1+a_2) \zeta^3 =0.
\end{gather*}
In this way one obtains for every $t$ a projective, linear transformation $E_t \rightarrow E_t^W$. Let us denote this transformation by $A_t$.

\begin{proof}[Proof of Proposition~\ref{4.10} for the case $\boldsymbol{j(E) \neq 0}$]Given another elliptic curve $E'$ with $j$-in\-variant $j_0'$, we want to determine $t$ for which our Hesse pencil has the same $j$-invariant. First, let us assume that $j_0'$ is nonzero and not equal to $j_0$. Then
\begin{gather*}
j_0'=j\big(E^W_t\big) \ \Leftrightarrow \ (t+1)^{12} = j_0'a_6\big(t^4+t^3+t^2+t+a_6\big)^3.
\end{gather*}
Define the polynomial
\begin{gather*}G=(t+1)^{12} + j_0'a_6\big(t^4+t^3+t^2+t+a_6\big)^3.\end{gather*}
The zeros of this polynomial are precisely all $t_0$ such that $j\big(E_{t_0}^W\big)=j_0'$. The discriminant of $G$ equals $a_6^{44} j_0'^{14}$, which is nonzero, because $j_0'$ and $a_6$ are nonzero. We conclude that precisely $12$ values $t_0\in\bar{k}$ exist which give the desired $j$-invariant.

For every $t_0$, there is an isomorphism $A_{t_0}$ between $E_{t_0}$ and $E_{t_0}^W$, induced by the change of coordinates seen above. For every $t_0$ which is moreover a zero of $G$, there is an isomor\-phism~$\Psi _{t_0}$ between $E_{t_0}^W$ and $E'$, because these curves have equal $j$-invariants. Lastly, there exist 2 automorphisms $\sigma$ of $E'$ \cite[p.~410]{silverman1985}. Taking the composition of these three isomorphisms and restricting to the 3-torsion group $E_{t_0}[3]$, which equals $E[3]$, we obtain $12 \times 2 = 24$ isomor\-phisms~$\phi_{t_0,\sigma}$; they are described as
\begin{gather*}\phi_{t_0,\sigma} = \sigma \circ \Psi _{t_0} \circ A_{t_0}| _{E_{t_0}[3]}\colon \ E[3] \rightarrow E'[3].\end{gather*}
These 24 isomorphisms are pairwise distinct and respect the Weil-pairing (see Section~\ref{hpga:sec:wp3tiie}, observe that this argument is independent of the characteristic of $k$).

Now consider the case $j_0'=j_0 \neq 0$. Then $j_0'a_6=1$ since $j_0=1/a_6$. Our polynomial $G$ therefore has degree~11 and discriminant~$a_6^{30} \neq 0$. So this gives us 11 pairwise distinct $t\in\bar{k}$ such that $j\big(E_t^W\big)=j_0$. Another curve with this $j$-invariant is $E_{\infty}=E$. So again we find 12 distinct $t$'s and in the same way as above, we find 24 isomorphisms respecting the Weil-pairing.

If $j_0'=0$, the only $t$-value with $j(E_t)=0$ is $t=1$. Because $E'$ has $j$-invariant zero and $k$ has characteristic 2, its automorphism group has 24 elements \cite[p.~410]{silverman1985}. So again we find~24 isomorphisms respecting the Weil-pairing.

We now complete the proof of Proposition~\ref{4.10} for the case $j(E)\neq 0$ by the exact same argument as presented in the proof of Proposition~\ref{prop10}.
\end{proof}

\subsection[The case $j(E) = 0$]{The case $\boldsymbol{j(E) = 0}$}

For $j(E)=0$ the calculations are slightly more involved. Bringing the Hesse pencil
\begin{gather*}
t\big(y^2z + a_3yz^2 + x^3 + a_4xz^2 + a_6z^3\big) = xy^2 + a_3 xyz+ a_4 x^2z + \big(a_3^2 + a_6\big)xz^2 + a_4^2z^3
\end{gather*}
in Weierstrass form, one obtains $\mathcal{E}^W$ of the form
\begin{gather*}
\eta^2\zeta + \xi \eta \zeta + \xi^3 +b_2 \xi^2 \zeta + b_6 \zeta^3 = 0
\end{gather*}
with $b_2$, $b_6$ explicit rational expressions in the $a_j$. The $j$-invariant of $E_t^W$ for $t \neq0$ is
\begin{gather*}
j\big(E^W_t\big) = \frac{{a_3}^8}{\big(t^4+a_3^2t+a_4^2\big)^3}.
\end{gather*}
If $t=0$, so if $E_t=E_0$ is the Hessian curve, the transformations needed here are not valid. Therefore we treat this case separately. The Hessian here is given by
\begin{gather*}xy^2 + a_3xyz + a_4x^2z + \big(a_3^2 + a_6\big)xz^2 + a_4^2z^3=0.\end{gather*}
This results in the Weierstrass equation
\begin{gather*}\eta^2\zeta + a_3^2\xi \eta\zeta + \xi^3+ \big({a_3^4 + a_6}{a_3^2}\big)\xi^2\zeta + {a_3^4a_4^6 }\zeta^3=0\end{gather*}
with $j$-invariant $\frac{a_3^8}{a_4^6}$. We conclude that for every $t$ including $t=0$, the $j$-invariant of $E_t^W$ is given by
\begin{gather*}
j\big(E_t^W\big) = \frac{{a_3}^8}{\big(t^4+a_3^2t+a_4^2\big)^3}.
\end{gather*}
Moreover, again we have a projective linear automorphism $A_t\colon E_t\to E_t^W$.

\begin{proof}[Proof of Proposition~\ref{4.10} if $\boldsymbol{j(E) = 0}$] Again, we want to show that for every $j_0'$, there exist 24 different isomorphisms. First, assume that $j_0' \neq 0$. Define
\begin{gather*} G:=a_3^8 + j_0'\big(t^4+a_3^2t+a_4^2\big)^3 ,
\end{gather*}
which has degree 12 and discriminant $a_3^{176}j_0'^{14} = \Delta (E)^{44} j_0'^{14}$. Therefore $G$ has $12$ pairwise distinct zeros, which are all solutions $t_0$ such that $j\big(E^W_{t_0}\big)=j_0'$. Again, together with the 2 automorphisms $\sigma$, we find 24 isomorphisms.

Now assume that $j_0'=0$. In this case, the curve in the Hesse pencil we are looking for, is~$E$ itself: this curve has $j$-invariant zero. And again the automorphism group has order 24 so also in this case, we find 24 isomorphisms again, and the proof of Proposition~\ref{4.10} in this case is completed using the same reasoning as before (compare the proof of Proposition~\ref{prop10}).
\end{proof}

Using Proposition~\ref{4.10} one concludes that Theorem~\ref{hpga:thm:mt} holds in characteristic two as well, using the reasoning as presented in Section~\ref{sec7}.

\section{Comparison with the literature}\label{hpga:sec:cwer}

Theorem~\ref{hpga:thm:mt} is part of a more general problem: Given an elliptic curve $E$ over a field $k$ and an integer $n$, describe the universal family of elliptic curves $\mathcal{E}$ such that for each member~$\mathcal{E}_{t_0}$ the Galois representations on $E\msets{n}$ and $\mathcal{E}_{t_0}\msets{n}$ are isomorphic and the isomorphism is symplectic. For various $n$ explicit families are known in the literature.

In~\cite{rubin1993} Rubin and Silverberg construct for any elliptic curve over $\mqq$ such an explicit family for $n=3$ and $n=5$. Their proofs are motivated by the theory of modular curves. Our Theorem~\ref{hpga:thm:mt} corresponds roughly to~\cite[Theorem~4.1]{rubin1993} and~\cite[Remark~4.2]{rubin1993}.

Using invariant theory and a generalization of the classical Hesse pencil, Fisher in~\cite{fisher2012} describes such families for elliptic curves defined over a perfect field of characteristic not dividing~$6n$ with $n=2,3,4,5$. Theorem~\ref{hpga:thm:mt} is a special case of~\cite[Theorem~13.2]{fisher2012}. It is unclear whether Fisher's proof of \cite[Theorem~13.2]{fisher2012} can be adapted to the case of characteristic two. In~\cite{Fisher14} Fisher moreover treats the cases $n=7$ and $n=11$.

The Hesse pencil is used by Kuwata in~\cite{kuwata2012}. For any elliptic curve $E$ over a number field he constructs two families of elliptic curves such that for each member the Galois representation on its 3-torsion is equivalent to the one on $E\msets{3}$. In the first family the isomorphism of the 3-torsion groups is symplectic, whereas in the second family the isomorphism is anti-symplectic. The proofs use classical projective geometry and the classification of rational elliptic surfaces. Theorem~\ref{hpga:thm:mt} is essentially \cite[Theorem~4.2]{kuwata2012} (although our proof is more detailed and totally elementary, and moreover we extend the result to characteristic two). Notice that the Weierstrass form of the Hesse pencil in \cite[Remark~4.4]{kuwata2012} is the same as the one in Proposition~\ref{prop7} with $t$ replaced by $t^{-1}$ and the $x$ and~$y$ coordinates scaled by some power of~$t$.

An overview of results on the classical Hesse pencil is given by Artebani and Dolgachev in~\cite{artebani2009}.

\subsection*{Acknowledgements}
It is a pleasure to thank the referees of an earlier draft of this text. Their comments and suggestions were greatly appreciated. We also thank Matthias Sch\"{u}tt for helpful remarks.

\pdfbookmark[1]{References}{ref}
\LastPageEnding


\begin{thebibliography}{99}
\footnotesize\itemsep=0pt

\bibitem{ane}
Anema A.S.I., The arithmetic of maximal curves, the {H}esse pencil and the {M}estre
 curve, Ph.D.~Thesis, Rijksuniversiteit Groningen, 2016, available at
 \url{http://hdl.handle.net/11370/0ef530b1-709b-4285-b68d-016a67e6e928}.

\bibitem{artebani2009}
Artebani M., Dolgachev I., The {H}esse pencil of plane cubic curves,
 \href{https://doi.org/10.4171/LEM/55-3-3}{\textit{Enseign. Math.}} \textbf{55} (2009), 235--273,
 \href{https://arxiv.org/abs/math.AG/0611590}{math.AG/0611590}.

\bibitem{ART}
Artin M., Rodriguez-Villegas F., Tate J., On the {J}acobians of plane cubics,
 \href{https://doi.org/10.1016/j.aim.2005.06.004}{\textit{Adv. Math.}} \textbf{198} (2005), 366--382.

\bibitem{bosma1997}
Bosma W., Cannon J., Playoust C., The {M}agma algebra system. {I}.~{T}he user
 language, \href{https://doi.org/10.1006/jsco.1996.0125}{\textit{J.~Symbolic Comput.}} \textbf{24} (1997), 235--265.

\bibitem{Dickson}
Dickson L.E., Invariantive theory of plane cubic curves modulo~2,
 \href{https://doi.org/10.2307/2370419}{\textit{Amer.~J. Math.}} \textbf{37} (1915), 107--116.

\bibitem{fisher2012}
Fisher T., The {H}essian of a genus one curve, \href{https://doi.org/10.1112/plms/pdr039}{\textit{Proc. Lond. Math. Soc.}}
 \textbf{104} (2012), 613--648, \href{https://arxiv.org/abs/math.NT/0610403}{math.NT/0610403}.

\bibitem{Fisher14}
Fisher T., On families of 7- and 11-congruent elliptic curves, \href{https://doi.org/10.1112/S1461157014000059}{\textit{LMS~J.
 Comput. Math.}} \textbf{17} (2014), 536--564.

\bibitem{fulton2008}
Fulton W., Algebraic curves. {A}n introduction to algebraic geometry, 2008,
 {a}vailable at \url{http://www.math.lsa.umich.edu/~wfulton/}.

\bibitem{Glynn}
Glynn D.G., On cubic curves in projective planes of characteristic two,
 \textit{Australas.~J. Combin.} \textbf{17} (1998), 1--20.

\bibitem{kuwata2012}
Kuwata M., Constructing families of elliptic curves with prescribed mod~3
 representation via {H}essian and {C}ayleyan curves, \href{https://arxiv.org/abs/1112.6317}{arXiv:1112.6317}.

\bibitem{pascal1902}
Pascal E., Repertorium der h\"oheren Mathematik: II.~Teil: Die Geometrie,
 B.G.~Teubner, Leipzig, 1902 (German translation of Repertorio di
  matematiche superiori (definizioni, formole, teoremi, cenni bibliografici), II. Geometria, Ulrico Hoepli, Milano, 1900).

\bibitem{rubin1993}
Rubin K., Silverberg A., Families of elliptic curves with constant mod~{$p$}
 representations, in Elliptic Curves, Modular Forms, \& {F}ermat's Last
 Theorem ({H}ong {K}ong, 1993), \textit{Ser. Number Theory,~I}, Int. Press, Cambridge,
 MA, 1995, 148--161.

\bibitem{silverman1985}
Silverman J.H., The arithmetic of elliptic curves, \href{https://doi.org/10.1007/978-1-4757-1920-8}{\textit{Graduate Texts in
 Mathematics}}, Vol.~106, Springer-Verlag, New York, 1986.

\bibitem{TY2007}
Top J., Yui N., Explicit equations of some elliptic modular surfaces,
 \href{https://doi.org/10.1216/rmjm/1181068772}{\textit{Rocky Mountain~J. Math.}} \textbf{37} (2007), 663--687,
 \href{https://arxiv.org/abs/math.AG/0307230}{math.AG/0307230}.

\bibitem{tuijp2015}
Tuijp A., Hesse pencil in characteristic two, {B}achelor's Thesis,
 Rijksuniversiteit Groningen, 2015, available at
 \url{http://fse.studenttheses.ub.rug.nl/id/eprint/13074}.

\bibitem{washington2008}
Washington L.C., Elliptic curves. Number theory and cryptography, 2nd ed.,
 \href{https://doi.org/10.1201/9781420071474}{\textit{Discrete Mathematics and its Applications}}, Chapman \& Hall/CRC, Boca Raton,
 FL, 2008.

\end{thebibliography}
\end{document}